\numberwithin{equation}{section}
\newcommand{\nat}{\ensuremath{\mathbb N}}
\newcommand{\real}{\ensuremath{\mathbb R}}
\newcommand{\integer}{\ensuremath{\mathbb Z}}
\newtheorem{theorem}{Theorem}[section]
\newtheorem{propos}[theorem]{Proposition}
\newtheorem{definition}[theorem]{Definition}
\newtheorem{cor}[theorem]{Corollary}
\newtheorem{lemma}[theorem]{Lemma}
\newtheorem{Example}[theorem]{Example}
\newtheorem{Example-rm}[theorem]{Example}}
\newtheorem{Examples-rm}[theorem]{Examples}}
\newtheorem{Remark}[theorem]{Remark}
\newtheorem{Remark-rm}[theorem]{Remark}}
\newcommand{\BOX}{\mbox{{\ensuremath{\Box}}\hspace{-0.5mm}}}
\newenvironment{proof}{{\par\noindent\bf Proof:}}{\mbox{}\hfill$\BOX$\\}
\noindent\textbf{Proof of claim \protect\ref{#1}:}} {\mbox{}\hspace*{\fill}\\}
\noindent\textbf{Proof of Theorem \protect\ref{#1}:}} 
\newcommand\refeq[1]{{{\rm (\ref{#1})}}}  
\newcommand{\bee}{\begin{equation}} 
\newcommand{\ene}{\end{equation}}
\newcommand{\beE}{\begin{Equation}} 
\newcommand{\enE}{\end{Equation}}
\newcommand{\bdi}{\begin{displaymath}}  
\newcommand{\edi}{\end{displaymath}}
\newcommand{\bDI}{\begin{Displaymath}}  
\newcommand{\eDI}{\end{Displaymath}}
\newcommand{\bqa}{\begin{eqnarray}} 
\newcommand{\eqa}{\end{eqnarray}}
\newcommand{\bqA}{\begin{Eqnarray}} 
\newcommand{\eqA}{\end{Eqnarray}}
\newcommand{\bea}{\begin{eqnarray*}}  
\newcommand{\ena}{\end{eqnarray*}}
\newcommand{\beA}{\begin{Eqnarray*}}  
\newcommand{\enA}{\end{Eqnarray*}}
\title{On optimal stationary couplings between stationary processes}
\author{Ludger Rüschendorf\thanks{University Freiburg}, Tomonari Sei\thanks{Keio University}}
\begin{document}

\maketitle\thispagestyle{empty}

\begin{abstract}
 By a classical result of \cite{Gray-Neuhoff-Shields-1975} the $\bar\varrho$ distance between stationary processes is identified with an optimal stationary coupling problem of the corresponding stationary measures on the infinite product spaces. This is a modification of the optimal coupling problem from Monge--Kantorovich theory. In this paper we derive some general classes of examples of optimal stationary couplings which allow to calculate the $\bar\varrho$ distance in these cases in explicit form. We also extend the $\bar\varrho$ distance to random fields and to general nonmetric distance functions and give a construction method for optimal stationary $\bar c$-couplings. Our assumptions need in this case a geometric positive curvature condition.
\end{abstract}

\renewcommand{\thefootnote}{}
\footnotetext{\hspace*{-.51cm}%
AMS 2000 subject classification: 60E15, 60G10\\
Key words and phrases: Optimal stationary couplings, $\bar\varrho$-distance, stationary processes, Monge--Kantorovich theory}

\section{Introduction}\label{sec-Intro}

\cite{Gray-Neuhoff-Shields-1975} introduced the $\bar\varrho$ distance between two stationary probability measures $\mu$, $\nu$ on $E^\integer$, where $(E,\varrho)$ is a separable, complete metric space (Polish space). The $\bar\varrho$ distance extends Ornstein's $\bar d$ distance (\cite{Ornstein-1973}) and is applied to the information theoretic problem of source coding with a fidelity criterion, when the source statistics are incompletely known. $\bar\varrho$ is defined via the following steps. Let $\varrho_n:E^n\times E^n\to\real$ denote the average distance per component on $E^n$ 
\begin{equation}\label{eq:1.1}
 \varrho_n(x,y) := \frac1n \sum_{i=0}^{n-1} \varrho(x_i,y_i),\quad x=(x_0,\dots,x_{n-1}), y=(y_0,\dots,y_{n-1}).
\end{equation}

Let $\bar\varrho_n$ denote the corresponding minimal $\ell_1$-metric also called Wasserstein distance or Kantorovich distance of the restrictions of $\mu$, $\nu$ on $E^n$, i.e.
\begin{equation}\label{eq:1.2}
 \bar\varrho_n(\mu,\nu) = \inf\bigg\{\int \varrho_n(x,y)d\beta(x,y)\mid \beta\in M(\mu^n,\nu^n)\bigg\},
\end{equation}
where $\mu^n$, $\nu^n$ are the restrictions of $\mu$, $\nu$ on $E^n$, i.e. on the coordinates $(x_0,\dots,x_{n-1})$ and $M(\mu^n,\nu^n)$ is the Fr\'echet class of all measures on $E^n\times E^n$ with marginals $\mu^n$, $\nu^n$. Then the $\bar\varrho$ distance between $\mu$, $\nu$ is defined as
\begin{equation}\label{eq:1.3}
 \bar\varrho(\mu,\nu) = \sup_{n\in\nat} \bar\varrho_n(\mu,\nu).
\end{equation}
It is known that $\bar\varrho(\mu,\nu)=\lim_{n\to\infty}\bar\varrho_n(\mu,\nu)$ by Fekete's lemma on superadditive sequences.

$\bar\varrho$ has a natural interpretation as average distance per coordinate between two stationary sources in an optimal coupling. In the original Ornstein version $\varrho$ was taken as discrete metric on a finite alphabet. This interpretation is further justified by the basic representation result (cp. \citet[Theorem 1]{Gray-Neuhoff-Shields-1975})
\begin{eqnarray}%
\label{eq:1.4}
\bar\varrho(\mu,\nu) %
&=& \bar\varrho_s (\mu,\nu) := \inf_{\Gamma\in M_{\rm s}(\mu,\nu)} \int \varrho(x_0,y_0)d\Gamma(x,y) 
\\
\label{eq:1.5}
&=& \inf \{ E\varrho(X_0,Y_0)\mid (X,Y) \sim \Gamma\in M_{\rm s}(\mu,\nu)\}.
\end{eqnarray}
Here $M_{\rm s}(\mu,\nu)$ is the set of all jointly stationary (i.e. jointly shift invariant) measures on $E^\integer\times E^\integer$ with marginals $\mu$, $\nu$ and $(X,Y)\sim \Gamma$ means that $\Gamma$ is the distribution of $(X,Y)$. Thus $\bar\varrho(\mu,\nu)$ can be seen as a Monge--Kantorovich problem on $E^\integer$ with however a modified Fr\'echet class $M_{\rm s}(\mu,\nu)\subset M(\mu,\nu)$. \eqref{eq:1.5} states this as an optimal coupling problem between jointly stationary processes $X$, $Y$ with marginals $\mu$, $\nu$.  A pair of jointly stationary processes $(X,Y)$ with distribution $\Gamma\in M_{\rm s}(\mu,\nu)$ is called \emph{optimal stationary coupling of $\mu,\nu$} if it solves problem \eqref{eq:1.5}, i.e. it minimizes the stationary coupling distance $\bar\varrho_s$.

By definition it is obvious (see \cite{Gray-Neuhoff-Shields-1975}) that
\begin{eqnarray}\label{eq:1.6}
 \bar\varrho_1(\mu,\nu)\le \bar\varrho(\mu,\nu) %
\le \int\varrho(x_0,y_0) d\mu^0(x_0) d\nu^0(y_0),
\end{eqnarray}
the left hand side being the usual minimal $\ell_1$-distance (Kantorovich distance) between the single components $\mu^0$, $\nu^0$.

As remarked in \citet[Example 2]{Gray-Neuhoff-Shields-1975} the main representation result in \eqref{eq:1.4}, \eqref{eq:1.5} does not use the metric structure of $\varrho$ and $\varrho$ can be replaced by a general cost function $c$ on $E\times E$ implying then the generalized optimal stationary coupling problem
\begin{equation}\label{eq:1.7}
 \bar c_s (\mu,\nu) = \inf\{E c (X_0,Y_0)\mid (X,Y)\sim\Gamma\in M_{\rm s}(\mu,\nu)\}.
\end{equation}

Only in few cases information on this optimal coupling problem for $\bar\varrho$ resp. $\bar c$ is given in the literature. \cite{Gray-Neuhoff-Shields-1975} determine $\bar\varrho$ for two i.i.d. binary sequences with success probabilities $p_1$, $p_2$. They also derive for quadratic cost $c(x_0,y_0)=(x_0-y_0)^2$ upper and lower bounds for two stationary Gaussian time series in terms of their spectral densities. We do not know of further explicit examples in the literature for the $\bar\varrho$ distance. The aim of our paper is to derive optimal couplings and solutions for the $\bar\varrho$ metric resp. the generalized $\bar c$ distance.

The $\bar\varrho$ resp. $\bar c$ distance is particularly adapted to stationary processes. One should note that from the general Monge--Kantorovich theory characterizations of optimal couplings for some classes of distances $c$ are available and have been determined for time series and stochastic processes in some cases. For processes with values in a Hilbert space (like the weighted $\ell_2$ or the weighted $L^2$ space) and for general cost functions $c$, general criteria for optimal couplings have been given in \cite{Rueschendorf-Rachev-1990} and \cite{Rueschendorf-1991}. For some examples and extensions to Banach spaces see also \cite{CuestaAlbertos-Rueschendorf-TueroDiaz-1993}
and \cite{Rueschendorf-1995}.  Some of these criteria have been further extended to measures $\mu$, $\nu$ in the Wiener space $(W,H,\mu)$ w.r.t. the squared distance $c(x,y)=|x-y|_H^2$ by Feyel and \"Ust\"unel (2002, 2004)\nocite{Feyel-Uestuenel-2002} \nocite{Feyel-Uestuenel-2004} and \cite{Uestuenel-2007}. All these results are also applicable to stationary measures and characterize optimal couplings between them. But they do not respect the special stationary structure as described in the representation result in \eqref{eq:1.5}, \eqref{eq:1.7}. In the following sections we want to determine optimal stationary couplings between stationary processes.

In Section \ref{sec:2} we consider the optimal stationary coupling of stationary processes on $\real$ and on $\real^m$ with respect to squared distance. In Section \ref{sec:3} we give an extension to the case of random fields. Finally we consider in Section \ref{sec:4} an extension to general cost functions. We interpret an optimal coupling condition by a geometric curvature condition.

\section{Optimal couplings of stationary processes w.r.t. squared distance}
\label{sec:2}

In this section we consider the case where $E=\real$ (resp. $\real^m$), $\Omega=E^\integer$ and with squared distance $c(x_0,y_0)=(x_0-y_0)^2$ (resp. $\|x_0-y_0\|^2$ on $\real^m$). Let $L:\Omega\to\Omega$ denote the left shift, $(Lx)_t=x_{t-1}$. Then a pair of processes $(X,Y)$  with values in $\Omega\times\Omega$ is \emph{jointly stationary} when $(X,Y)\stackrel{d}{=} (LX,LY)$ ($\stackrel{d}{=}$ denotes equality in distribution). A Borel measurable map $S:\Omega\to\Omega$ is called \emph{equivariant} if
\begin{equation}\label{eq:2.1}
 L\circ S=S\circ L.
\end{equation}
This notion is borrowed from the corresponding notion in statistics, where it is used in connection with statistical group models. The following lemma concerns some elementary properties.

\begin{lemma}\label{lem:2.1}
\begin{enumerate}[a)]
 \item A map $S:\Omega\to\Omega$ is equivariant if and only if $S_t(x)=S_0(L^{-t}x)$ for any $t,x$.
 \item If $X$ is a stationary process and $S$ is equivariant then $(X,S(X))$ is jointly stationary.
\end{enumerate}

\end{lemma}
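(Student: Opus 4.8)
My plan is to unwind both parts directly from the definitions, relying on the coordinate description $(Lx)_t = x_{t-1}$ and the equivariance relation $L \circ S = S \circ L$.

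For part a), I would first read off what $L \circ S = S \circ L$ says coordinatewise. Writing $S(x) = (S_t(x))_{t \in \integer}$, the $t$-th coordinate of $L \circ S(x)$ is $(S(x))_{t-1} = S_{t-1}(x)$, while the $t$-th coordinate of $S \circ L(x)$ is $S_t(Lx)$. Hence equivariance is equivalent to the identity $S_{t-1}(x) = S_t(Lx)$ for all $t$ and $x$, i.e.\ $S_t(Lx) = S_{t-1}(x)$. The plan is then to iterate this recursion: applying it $t$ times (and using that $L$ is invertible with $(L^{-1}x)_s = x_{s+1}$) should telescope to $S_t(x) = S_0(L^{-t}x)$. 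Concretely, from $S_t(x) = S_{t-1}(L^{-1}x)$ one decrements the subscript step by step until reaching $S_0$, picking up one factor of $L^{-1}$ at each step, which yields $S_t(x) = S_0(L^{-t}x)$. Conversely, if $S_t(x) = S_0(L^{-t}x)$ holds for all $t,x$, I would simply substitute into both sides of $S_{t-1}(x) = S_t(Lx)$ and check equality using $L^{-t}L = L^{-(t-1)}$, recovering equivariance. This establishes the ``if and only if''.

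For part b), the goal is to show $(X, S(X)) \stackrel{d}{=} (LX, LS(X))$, given that $X$ is stationary, i.e.\ $X \stackrel{d}{=} LX$, and that $S$ is equivariant. The key observation is that equivariance lets me move the shift past $S$: since $L \circ S = S \circ L$, we have $L S(X) = S(LX)$ pointwise. Therefore $(LX, LS(X)) = (LX, S(LX))$, which is exactly the image of $LX$ under the map $x \mapsto (x, S(x))$. Since $X \stackrel{d}{=} LX$ and applying a fixed measurable map to equidistributed random elements preserves equality in distribution, I get $(X, S(X)) \stackrel{d}{=} (LX, S(LX)) = (LX, LS(X))$, which is precisely joint stationarity.

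I do not expect any genuine obstacle here; both parts are formal consequences of the definitions. The only point requiring a little care is the bookkeeping of shift exponents in the iteration in part a)---making sure the recursion $S_t(Lx) = S_{t-1}(x)$ is correctly inverted to $S_t(x) = S_{t-1}(L^{-1}x)$ and that the telescoping produces exactly $L^{-t}$ rather than $L^{t}$. Beyond that, everything reduces to substitution and the elementary fact that equality in distribution is preserved under measurable mappings.
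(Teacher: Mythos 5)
Your proposal is correct and follows essentially the same route as the paper: part a) by reading off equivariance coordinatewise as $S_{t-1}(x)=S_t(Lx)$ and iterating (the paper phrases this as induction giving $S=L^t\circ S\circ L^{-t}$), and part b) by using equivariance to write $(LX,LS(X))=(LX,S(LX))$ and then pushing $X\stackrel{d}{=}LX$ through the fixed measurable map $x\mapsto(x,S(x))$. No gaps worth noting; only the trivially analogous iteration for negative $t$ is left implicit, exactly as in the paper.
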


\begin{proof}
 \begin{enumerate}[a)]
  \item 
If $L\circ S=S\circ L$ then by induction $S=L^t\circ S\circ L^{-t}$ for all $t\in \integer$, and thus $S_t(x)=S_0(L^{-t}x)$. Conversely, if $S_t(x)=S_0(L^{-t}x)$, then $S_{t-1}(x)=S_0(L^{-t+1}x)=S_t(Lx)$. This implies $L(S(x))=S(Lx)$. 
\item 
Since $LX$ has the same law as $X$, it follows that $(LX,L(S(X)))=(LX,S(LX))=(I,S)(LX)\stackrel{d}{=} (I,S)(X)=(X,S(X))$, I denoting the identity.
\vspace*{-3.75ex}
 \end{enumerate}
\end{proof}

For $X\stackrel{d}{=} \mu$ and $S:\Omega\to\Omega$ the pair $(X,S(X))$ is called \emph{optimal stationary coupling} if it is an optimal stationary coupling w.r.t. $\mu$ and $\nu:=\mu^S=S_\#\mu$, i.e., when $\nu$ is the corresponding image (push-forward) measure.

We first consider the case $E=\real$ and $\Omega=\real^\integer$.
To construct a class of optimal stationary couplings we define for a convex function $f:\real^n\to\real$ an equivariant map $S:\Omega\to\Omega$. For $x\in\Omega$ let
\begin{equation}\label{eq:2.2}
 \partial f(x)=\{y\in \real^n\mid f(z)-f(x)\ge y\cdot (z-x), \quad \forall z\in\real \}
\end{equation}
denote the subgradient of $f$ at $x$, where $a\cdot b$ denotes the standard inner product of vectors $a$ and $b$. By convexity $\partial f(x)\not= \phi$. Let $F(x)= (F_k(x))_{0\le k\le n-1}$ be measurable and $F(x)\in\partial f(x)$, $x\in\real^n$. The equivariant map $S$ is defined via Lemma~\ref{lem:2.1} by 
\begin{equation}\label{eq:2.3}
 S_0(x) = \sum_{k=0}^{n-1} F_k(x_{-k},\dots, x_{-k+n-1}), \qquad  S_t(x) = S_0(L^{-t}x), \quad x\in\Omega.
\end{equation}

For terminological reasons we write any map of the form \eqref{eq:2.3} as
\begin{equation}\label{eq:2.4}
 S_0(x) = \sum_{k=0}^{n-1} \partial_k f(x_{-k},\dots, x_{-k+n-1}), \qquad  S_t(x) = S_0(L^{-t}x), \quad x\in\Omega.
\end{equation}
In particular for differentiable convex $f$ the subgradient set coincides with the derivative of $f$, $\partial f(x)=\{\nabla f(x)\}$ and $\partial_t f(x)=\frac{\partial}{\partial x_t} f(x)$.

\medskip

\begin{Remark}\label{rem:2.1}
 \begin{enumerate}[a)]
\item %
In information theory a map of the form $S_t(x)=F(x_{t-n+1},\dots, x_{t+n-1})$ is called a sliding block code (see \cite{Gray-Neuhoff-Shields-1975}). Thus our class of maps $S$ defined in \eqref{eq:2.4} are particular sliding block codes.
\item %
\cite{Sei-2006, Sei-2010a, Sei-2010b} introduced so-called \emph{structural gradient models (SGM)} for stationary time series, which are defined as $\{(S_\vartheta)^\# Q\mid\vartheta\in\Theta\}$, where $Q$ is the infinite product of the uniform distribution on $[0,1]$, on $[0,1]^\integer$, $\{S_\vartheta\mid\vartheta\in \Theta\}$ is a parametric family of transformations of the form given in \eqref{eq:2.4} and $S_\vartheta^\# Q$ denotes the pullback measure of $Q$ by $S_\vartheta$. 
It turns out that these models have nice statistical properties, e.g. they allow for simple likelihoods and allow the construction of flexible dependencies. 
The restriction to functions of the form \eqref{eq:2.4} is well founded by an extended Poincar\'e lemma (see \citet[Lemma 3]{Sei-2010b}) saying in the case of differentiable $f$ that these functions are the only ones with (the usual) symmetry and with an additional stationarity property $S_{t-1}(x)=S_t(Lx)$ for $x\in\real^\integer$, which is related to our notion of equivariant mappings.
\item %
Even if a map $S$ has a representation of the form \eqref{eq:2.4}, the inverse map $S^{-1}$ does not have the same form in general. We give an example. Let $X=(X_t)_{t\in\integer }$ be a real-valued stationary process  with a spectral representation  $X_t = \int_0^1 \mathrm{e}^{2\pi\mathrm{i}\lambda t}M(\mathrm{d}\lambda)$,
 where $M(\mathrm{d}\lambda)$ is an $L^2$-random measure. Define a process $Y=(Y_t)$ by
\[
  Y_t = S_t(X) := X_t + \epsilon(X_{t-1}+X_{t+1}),
 \quad \epsilon\neq 0.
 \]
This is of the form \eqref{eq:2.4} with a function $f(x_0,x_1) =x_0^2/4 +\epsilon x_0x_1+x_1^2/4$ which is convex if $|\epsilon|<1/2$. Under this condition, the map $X\mapsto Y$ is shown to be invertible as follows. The spectral representation of $Y$ is $N(\mathrm{d}\lambda) := (1+\epsilon(\mathrm{e}^{2\pi\mathrm{i}\lambda} + \mathrm{e}^{-2\pi\mathrm{i}\lambda})) M(\mathrm{d}\lambda)$.
 Then we have the following inverse representation
\[
  X_t = \int_0^1\frac{\mathrm{e}^{2\pi\mathrm{i}\lambda t}}
 {1+\epsilon(\mathrm{e}^{2\pi\mathrm{i}\lambda}+\mathrm{e}^{-2\pi\mathrm{i}\lambda})}
 N(\mathrm{d}\lambda)
 = \sum_{s\in\integer }b_sY_{t-s},
 \]
where $(b_s)_{s\in\integer }$ is defined by
 $\{1+\epsilon(\mathrm{e}^{2\pi\mathrm{i}\lambda} +\mathrm{e}^{-2\pi\mathrm{i}\lambda})\}^{-1}
 = \sum_{s\in\integer } b_s \mathrm{e}^{-2\pi\mathrm{i}\lambda s}$.
 By standard complex analysis, the coefficients $(b_s)$ are
 explicitly obtained:
\[
  b_s 
   = 
   \frac{z_+^{|s|}}{\epsilon(z_+-z_-)},
  \quad 
  z_{\pm} := \frac{-1\pm \sqrt{1-4\epsilon^2}}{2\epsilon}.
 \]
 Note that $|z_+|<1$ and $|z_-|>1$ since $|2\epsilon|<1$.
 Hence $b_s\neq 0$ for all $s\in\integer $ and the inverse map $S^{-1}(Y) = \sum_{s}b_sY_s$ does not have a representation as in  \eqref{eq:2.4}.
 \end{enumerate}
\end{Remark}

The following theorem implies that the class of equivariant maps defined in \eqref{eq:2.4} gives a class of examples of optimal stationary couplings between stationary processes.

\begin{theorem}[Optimal stationary couplings of stationary processes on \boldmath $\real$] \label{theo:2.2}
 Let $f$ be a convex function on $\real^n$, let $S$ be the equivariant map defined in \eqref{eq:2.4} and let $X$ be a stationary process with law $\mu$.
 Assume that $X_0$ and $\partial_kf(X^n)$ ($k=0,\ldots,n-1$) are in $L^2(P)$.
 Then $(X,S(X))$ is an optimal stationary coupling w.r.t. squared distance between $\mu$ and $\mu^S$, i.e.
\[
 E[(X_0-S_0(X))^2]=\min_{(X,Y)\sim\Gamma\in M_{\rm s}(\mu,\mu^S)} E[(X_0-Y_0)^2] = \bar\varrho_{\rm s} (\mu,\mu^S),
\]

\end{theorem}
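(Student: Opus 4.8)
The plan is to reduce the problem to a correlation maximisation and then certify optimality by a Fenchel--Young duality argument on growing finite windows. Writing $X^{[N]}=(X_0,\dots,X_{N-1})$ and expanding $(x_0-y_0)^2=x_0^2-2x_0y_0+y_0^2$, for every $\Gamma\in M_{\rm s}(\mu,\mu^S)$ the terms $E[X_0^2]$ and $E[Y_0^2]$ are fixed by the marginals $\mu^0$ and $(\mu^S)^0$. Hence minimising $E[(X_0-Y_0)^2]$ over $M_{\rm s}(\mu,\mu^S)$ is equivalent to maximising the correlation $E_\Gamma[X_0Y_0]$. By Lemma~\ref{lem:2.1}b) the pair $(X,S(X))$ is jointly stationary, so it lies in $M_{\rm s}(\mu,\mu^S)$ and already yields $\bar\varrho_{\rm s}(\mu,\mu^S)\le E[(X_0-S_0(X))^2]$; it remains to prove $E_\Gamma[X_0Y_0]\le E[X_0S_0(X)]$ for every $\Gamma\in M_{\rm s}(\mu,\mu^S)$.

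First I would evaluate the target value. Writing $X^n=(X_0,\dots,X_{n-1})$ and shifting each window by $k$ using stationarity of $X$,
\[
E[X_0S_0(X)]=\sum_{k=0}^{n-1}E\big[X_0\,\partial_kf(X_{-k},\dots,X_{-k+n-1})\big]=\sum_{k=0}^{n-1}E\big[X_k\,\partial_kf(X^n)\big]=E\big[\langle X^n,\nabla f(X^n)\rangle\big],
\]
where $\nabla f(X^n)=(\partial_0f(X^n),\dots,\partial_{n-1}f(X^n))\in\partial f(X^n)$ denotes the chosen subgradient selection. Since this is a subgradient of $f$ at $X^n$, the Fenchel equality gives $\langle X^n,\nabla f(X^n)\rangle=f(X^n)+f^*(\nabla f(X^n))$ with $f^*$ the convex conjugate, so
\[
E[X_0S_0(X)]=E[f(X^n)]+E\big[f^*(\nabla f(X^n))\big],
\]
these expectations being finite under the stated integrability hypotheses (convexity bounds $f(X^n)$ below by an integrable affine function, and $\langle X^n,\nabla f(X^n)\rangle\in L^1$ by Cauchy--Schwarz).

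For the upper bound I would use, for each $N\ge n$, the convex potential $g_N(x)=\sum_{j=0}^{N-n}f(x_j,\dots,x_{j+n-1})$ on $\real^N$, whose gradient satisfies $\partial_tg_N=S_t$ at every interior coordinate $n-1\le t\le N-n$. For arbitrary $\Gamma\in M_{\rm s}(\mu,\mu^S)$, joint stationarity gives $N\,E_\Gamma[X_0Y_0]=E_\Gamma[\langle X^{[N]},Y^{[N]}\rangle]$, and Fenchel--Young together with the marginal constraints yields
\[
N\,E_\Gamma[X_0Y_0]\le E_\mu\big[g_N(X^{[N]})\big]+E_{\mu^S}\big[g_N^*(Y^{[N]})\big],
\]
where $E_\mu[g_N(X^{[N]})]=(N-n+1)E[f(X^n)]$ by stationarity. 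To control the conjugate term I would use the dual inequality $g_N^*(y)\le\sum_{j=0}^{N-n}f^*(p_j)$, valid for any $(p_j)$ obeying the balance relation $\sum_{j}(p_j)_{t-j}=y_t$ for all $t$. Using $\mu^S=S_\#\mu$, so that $Y^{[N]}$ has the law of $(S_0(X),\dots,S_{N-1}(X))$, the choice $p_j=\nabla f(X_{[j]})$ with $X_{[j]}=(X_j,\dots,X_{j+n-1})$ realises the balance at every interior $t$ (there $\sum_j(p_j)_{t-j}=S_t(X)=y_t$), and only the $O(n)$ boundary equations need separate repair. Dividing by $N$ and letting $N\to\infty$, the interior windows contribute $\tfrac{N-n+1}{N}\big(E[f(X^n)]+E[f^*(\nabla f(X^n))]\big)\to E[X_0S_0(X)]$.

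The main obstacle is precisely this boundary repair: near the two ends of $\{0,\dots,N-1\}$ the gradient of $g_N$ differs from $S$, so $Y^{[N]}$ is not exactly in the range of $\nabla g_N$ at $X^{[N]}$, and one must exhibit the $O(n)$ boundary dual variables solving the finite, banded balance system while keeping $\sum_jE[f^*(p_j)]$ bounded uniformly in $N$. I would settle this using stationarity (the boundary windows are shifts of finitely many fixed laws) and the $L^2$-integrability of $X_0$ and of the $\partial_kf(X^n)$, which make each boundary term finite and their number $O(n)$, hence an $o(N)$ contribution after normalisation. Combining the three limits gives $E_\Gamma[X_0Y_0]\le E[f(X^n)]+E[f^*(\nabla f(X^n))]=E[X_0S_0(X)]$ for every $\Gamma$, and with the reverse inequality from the explicit coupling this yields $E[(X_0-S_0(X))^2]=\bar\varrho_{\rm s}(\mu,\mu^S)$, as claimed.
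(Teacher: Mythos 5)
Your reduction to correlation maximisation, the evaluation of $E[X_0S_0(X)]$ by index shifting plus the Fenchel equality, and the decomposition bound $g_N^*(y)\le\sum_{j=0}^{N-n}f^*(p_j)$ under the balance relation are all correct. The gap is exactly the step you flag as the main obstacle: the boundary repair cannot be carried out under the theorem's hypotheses, and for large classes of $f$ it fails outright. The dual variables in that bound are indexed only by the windows $j\in\{0,\dots,N-n\}$ actually present in $g_N$, and at the extreme coordinate $t=0$ the balance equation leaves no freedom at all: only the window $j=0$ covers coordinate $0$, so it forces $(p_0)_0=y_0=S_0(X)=\sum_{k=0}^{n-1}\partial_kf(X_{-k},\dots,X_{-k+n-1})$. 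This quantity aggregates subgradients from $n$ different windows and need not lie in the coordinate-$0$ section of $\mathrm{dom}\,f^*$. Concretely, if $f(x)=a\cdot x$ is affine, then $S(X)$ is the constant sequence $c=\sum_k a_k$, $\mathrm{dom}\,f^*=\{a\}$, and the forced equation $(p_0)_0=c\neq a_0$ makes $\sum_jf^*(p_j)=+\infty$ for every admissible choice; indeed $Y^{[N]}\notin\mathrm{dom}\,g_N^*$ almost surely, so the defect is in the potential $g_N$ itself near the boundary, and no choice of dual variables can repair it. The same phenomenon occurs non-trivially whenever $\nabla f$ has bounded range (e.g.\ Lipschitz $f$, $n\ge2$): with positive probability $S_0(X)$ exceeds the gradient bound and exits $\mathrm{dom}\,f^*$, again giving a vacuous $+\infty$ bound. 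Even in the good case where $f^*$ is finite everywhere, your $o(N)$ claim needs $E[f^*(p_j+\text{correction})]<\infty$, and the hypotheses $X_0,\partial_kf(X^n)\in L^2$ only control $f^*$ along the subgradient selection $\nabla f(X^n)$ (via $f^*(\nabla f(X^n))=\langle X^n,\nabla f(X^n)\rangle-f(X^n)$); off that set $f^*$ can grow arbitrarily fast, so finiteness requires moment assumptions the theorem does not make.

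For contrast, the paper avoids finite windows altogether: by the stationary gluing lemma (Appendix A) it builds a single jointly stationary triple $(X,Y,\tilde X)$ with $Y=S(X)$ and $(\tilde X,Y)\sim\Gamma$, and then the computation $\frac12E[(X_0-Y_0)^2-(\tilde X_0-Y_0)^2]=E[(\tilde X_0-X_0)S_0(X)]=E[\sum_k(\tilde X_k-X_k)\partial_kf(X^n)]\le E[f(\tilde X^n)-f(X^n)]=0$ finishes the proof; the index shift happens under the jointly stationary law on the infinite product, where there is no boundary. That is precisely the feature your marginals-only finite-dimensional duality cannot reproduce: to make it work you would have to build boundary-corrected potentials $g_N$ (not boundary-corrected dual variables), which in effect re-derives the stationarity argument the paper uses directly.
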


\begin{proof}
 Fix any $\Gamma\in M_{\rm s}(\mu,\mu^S)$.
 By the gluing lemma (see Appendix \ref{sec:A}),
 we can construct a jointly stationary process $(X,Y,\tilde{X})$ on a common probability space
 such that $X\sim \mu$, $Y=S(X)$ and $(\tilde{X},Y)\sim \Gamma$.
 From the definition of $Y_0=S_0(X)$, we have $Y_0\in L^2(P)$.
 Then by the assumption of identical marginals
%
%
%
 \begin{eqnarray*}
   A &:=& \frac{1}{2}\mathrm{E}[(X_0-Y_0)^2-(\tilde{X}_0-Y_0)^2] 
  \\
  &\kern.5ex=& \mathrm{E}[-X_0Y_0+\tilde{X}_0Y_0] 
  \\
  &\kern.5ex=& \mathrm{E}[(\tilde{X}_0-X_0)S_0(X)]
  \\
  &\kern.5ex=& \mathrm{E}\left[
  (\tilde{X}_0-X_0)\sum_{k=0}^{n-1}  (\partial_kf)(X_{-k},\ldots,X_{-k+n-1})
  \right].
\end{eqnarray*}

Using the stationarity assumption on $X$ we get with $X^n=(X_0,\dots X_{n-1})$, $\widetilde X^n=(X_0,\dots,\widetilde X_{n-1})$ that
\begin{eqnarray*}
   A  
  &=& \mathrm{E}\left[
  \sum_{k=0}^{n-1}(\tilde{X}_k-X_k) (\partial_kf)(X_0,\ldots,X_{n-1})
  \right]\\
&\leq& \mathrm{E}[f(\tilde{X}^n)-f(X^n)] 
  \\
  &=& 0,
 \end{eqnarray*}
the inequality is a consequence of convexity of $f$. This implies optimality of $(X,Y)$. We note that the last equality uses integrability of $f(X^n)$, which comes from convexity of $f$ and the $L^2$-assumptions. This completes the proof.
\end{proof}

Theorem \ref{theo:2.2} allows to determine explicit optimal stationary couplings for a large class of examples. Note that -- at least in principle -- the $\overline\varrho$ distance can be calculated in explicit form for this class of examples.

The construction of Theorem \ref{theo:2.2} can be extended to multivariate stationary sequences in the following way. Let $(X_t)_{t\in\integer}$ be a stationary process, $X_t\in\real^m$ and let $f:(\real^m)^n\to\real$ be a convex function on $(\real^m)^n$. Define an equivariant map $S:(\real^m)^\integer  \to (\real^m)^\integer$ by
\begin{equation}\label{eq:2.5}
 \begin{split}
  S_0(x) &=\sum_{k=0}^{n-1} \partial_k f(x_{-k},\dots,x_{-k+n-1})\\
S_t(x) &= S_0(L^{-t}x), \quad x\in\Omega=(\real^m)^\integer
 \end{split}
\end{equation}
where $L^{-t}$ operates on each component of $x$ and $\partial_\ell f$ is (a representative of) the subgradient of $f$ w.r.t. the $\ell$-th component. Thus for differentiable $f$ we obtain 
\begin{equation}\label{eq:2.6}
 S_0(x) = \sum_{k=0}^{n-1} \nabla_k f(x_{-k},\dots,x_{-k+n-1})
\end{equation}
where $\nabla_\ell f$ is the gradient of $f$ w.r.t. the $\ell$-th component.

The classical result for optimal couplings w.r.t. the squared norm distance on $\real^m$ due to \cite{Rueschendorf-Rachev-1990} and \cite{Brenier-1991} characterizes optimal couplings $(Y,Z)$ of distributions $P$, $Q$ on $\real^m$ by the condition that
\begin{equation}\label{eq:2.7}
 Z\in\partial h(Y) \enskip a.s.
\end{equation}
for some convex function $h$. The construction in \eqref{eq:2.5} adapts this result to optimal stationary couplings of stationary processes on $\real^m$.

\begin{theorem}[Optimal stationary couplings of stationary processes on \boldmath $\real^m$]\label{theo:2.3}
 ~ Let $f$ be a convex function on $(R^m)^n$ and let $S$ be the equivariant map on $\Omega=(\real^m)^\integer$ defined in \eqref{eq:2.5}. Let $X$ be a stationary process on $\real^m$ with distribution $\mu$ and assume that $X_0$ and $\partial_k f(X^n)$, $0\le k\le n-1$, are square integrable. Then $(X,S(X))$ is an optimal stationary coupling between $\mu$ and $\mu^S=S_\#\mu$ w.r.t. squared distance, i.e.
\begin{equation}\label{eq:2.8}
 E\|X_0-S_0(X)\|_2^2 %
= \inf \{E\|Y_0-Z_0\|_2^2 \mid (Y,Z)\sim \Gamma\in M_{\rm s}(\mu,\mu^S)\}%
= \bar\varrho_{\rm s} (\mu,\mu^S).
\end{equation}
\end{theorem}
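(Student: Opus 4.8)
The plan is to imitate exactly the proof of Theorem~\ref{theo:2.2}, replacing scalar products by the standard inner product on $\real^m$ and the square by the squared Euclidean norm $\|\cdot\|_2^2$. First I would fix an arbitrary $\Gamma\in M_{\rm s}(\mu,\mu^S)$ and invoke the gluing lemma (Appendix~\ref{sec:A}) to realize on one probability space a jointly stationary triple $(X,Y,\tilde X)$ with $X\sim\mu$, $Y=S(X)$, and $(\tilde X,Y)\sim\Gamma$. The $L^2$ assumptions on $X_0$ and the $\partial_k f(X^n)$ guarantee $Y_0=S_0(X)\in L^2$, so all the inner products below are integrable.

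Next I would expand the key difference. Since $\mu$ and $\mu^S$ are the two marginals and the triple shares the marginal $Y$, the constant terms cancel and one gets
\begin{equation*}
 A := \tfrac12\,\mathrm{E}\big[\|X_0-Y_0\|_2^2-\|\tilde X_0-Y_0\|_2^2\big]
   = \mathrm{E}\big[(\tilde X_0-X_0)\cdot S_0(X)\big],
\end{equation*}
where now $\cdot$ is the inner product on $\real^m$. Substituting the definition $S_0(X)=\sum_{k=0}^{n-1}\partial_k f(X_{-k},\dots,X_{-k+n-1})$ and using stationarity of the triple to shift each summand so that the convex function is evaluated at the block $X^n=(X_0,\dots,X_{n-1})$, I would rewrite
\begin{equation*}
 A = \mathrm{E}\Big[\sum_{k=0}^{n-1}(\tilde X_k-X_k)\cdot(\partial_k f)(X_0,\dots,X_{n-1})\Big].
\end{equation*}
Here each $\tilde X_k-X_k$ and each $\partial_k f$ is a vector in $\real^m$, and the inner product is the full $\real^m$ inner product, which is precisely what the block subgradient inequality for $f$ on $(\real^m)^n$ requires.

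The decisive step is the convexity inequality: writing $\xi=(\partial_0 f,\dots,\partial_{n-1}f)(X^n)\in\partial f(X^n)\subset(\real^m)^n$, the subgradient inequality gives $\xi\cdot(\tilde X^n-X^n)\le f(\tilde X^n)-f(X^n)$, where the dot is the inner product on $(\real^m)^n$. This is exactly the sum $\sum_k(\tilde X_k-X_k)\cdot\partial_k f(X^n)$, so $A\le\mathrm{E}[f(\tilde X^n)-f(X^n)]$. Since $\tilde X^n$ and $X^n$ have the same law (both blocks are distributed according to $\mu$ restricted to $n$ coordinates, as $\tilde X\sim\mu$) and $f(X^n)$ is integrable by convexity together with the $L^2$ hypotheses, this expectation vanishes, so $A\le 0$, giving $\mathrm{E}\|X_0-Y_0\|_2^2\le\mathrm{E}\|\tilde X_0-Y_0\|_2^2$ for every competing $\Gamma$ and hence optimality.

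The only substantive point to check — and the place I would expect the work to be, though it is essentially bookkeeping — is that the block-subgradient inequality on $(\real^m)^n$ assembles correctly from the componentwise subgradients $\partial_\ell f$ defined after \eqref{eq:2.5}. By definition $\partial_\ell f$ is the subgradient of $f$ in its $\ell$-th $\real^m$-block, so the concatenation $(\partial_0 f,\dots,\partial_{n-1}f)(x)$ lies in the full subgradient $\partial f(x)\subset(\real^m)^n$, and the defining inequality for $\partial f$ is exactly the sum of the block inner products. Once this identification is made, the scalar argument of Theorem~\ref{theo:2.2} carries over verbatim, with no additional obstacle arising from the vector-valued setting.
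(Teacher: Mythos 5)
Your proof is correct and follows essentially the same route as the paper's: a gluing-lemma realization of the jointly stationary triple $(X,Y,\tilde X)$ with $X\sim\mu$, $Y=S(X)$, $(\tilde X,Y)\sim\Gamma$, expansion of the squared norms, an index shift by joint stationarity, and the subgradient inequality for the convex function $f$ on $(\real^m)^n$. The one point to phrase carefully is the one you flag at the end: independently chosen blockwise partial subgradients need \emph{not} assemble into a joint subgradient when $f$ is nondifferentiable (e.g.\ $f(x_0,x_1)=|x_0-x_1|$ at the origin), so $(\partial_0 f,\dots,\partial_{n-1}f)$ must be read---as in the paper's scalar construction \eqref{eq:2.3}--\eqref{eq:2.4}, where $F(x)\in\partial f(x)$ is a measurable selection of the \emph{full} subgradient $\partial f(x)\subset(\real^m)^n$---so that your block inequality is then precisely the defining inequality of $\partial f$ and the argument closes.
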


\begin{proof}
 The proof is similar to that of Theorem \ref{theo:2.2}. For a jointly stationary process $(X,Y,\tilde X)$ with $X\sim \mu$, $Y\stackrel{d}{=} S(X)$ and $\tilde X\stackrel{d}{=} X\sim \mu$ we have using stationarity and convexity as in Theorem~\ref{theo:2.2}.
\begin{eqnarray*}
 \frac12 E(\|X_0-Y_0\|_2^2 - \|\tilde X_0-Y_0\|_2^2) 
&=& E[-X_0\cdot Y_0+\tilde X_0\cdot Y_0] \\
&=& E(\tilde X_0-X_0)\cdot \sum_{k=0}^{n-1} \partial_k f(X_{-k},\dots,X_{-k+n-1})
\\
&=& E\sum_{k=0}^{n-1} (\tilde X_k-X_k) \cdot \partial_k f(X_0,\dots,X_{n-1}) 
\\
&\le& E(f(\tilde X_0,\dots,\tilde X_{n-1})-f(X_0,\dots,X_{n-1})) = 0.
\end{eqnarray*}
The third equality follows from the stationarity assumption and the inequality follows from convexity of $f$. Thus \eqref{eq:2.8} follows.
\end{proof}

\begin{Remark}\label{rem:2.2}
Considering the case where $\mu$ is a stationary probability measure on $\real^\integer$ corresponding to the real stationary process $X$ on $\real$ we can introduce the multivariate stationary process $Y$ by $Y_k=(X_k,X_{k+1},\dots,X_{k+m-1})$ on $\real^m$. As consequence of Theorem~\ref{theo:2.3} we obtain explicit optimal coupling results for the strengthened stationary distances relative to \eqref{eq:1.3}, \eqref{eq:1.4}, \eqref{eq:1.5} by comparing finite dimensional distributions
\begin{equation}\label{eq:2.9}
\begin{split}
\varrho^m(\mu,\nu)  = \inf\big\{E\|Y_0-Z_0\|^2  \mid Y_0  
& \stackrel{d}{=} \mu^m, Z_0\stackrel{d}{=} \nu^m, \\
& (Y,Z) \text{ jointly stationary}, Y\stackrel{d}{=} \mu, Z\stackrel{d}{=}\nu \big\}
\end{split}
\end{equation}
\end{Remark}

Thus we can compare and optimally couple not only the one-dimensional marginals in a stationary way but can also compare the multivariate marginals in a stationary way.

\section{Optimal stationary couplings of random fields}
\label{sec:3}

In the first part of this section we introduce the $\bar\varrho$ distance defined on a product space in the case of countable groups and establish an extension of the \cite{Gray-Neuhoff-Shields-1975} representation result to random fields. In a second step we extend this result to amenable groups on a Polish function space. This motivates the consideration of the optimal stationary coupling result as in Section \ref{sec:2}.

We consider stationary real random fields on an abstract group $G$. Section \ref{sec:2} was concerned with the case of stationary discrete time processes, where $G=\integer$. Interesting extensions concern the case of stationary random fields on lattices $G=\integer^d$ or the case of stationary continuous time stochastic processes with $G=\real$ or $G=\real^d$.

Let $e$ be the unit element of $G$.
We consider the product space $\Omega=E^ G $ of a Polish space $(E,\varrho)$ (e.g. $E=\real$) equipped with the product topology. 
Note that $\Omega$ is not Polish in general, but its marginal sets $E^F$ on a finite or countable subset $F\subset G$ are Polish.
The (left) group action of $ G $ on $\Omega$ is defined by $(gx)_h=x_{g^{-1}h}$.
In particular, $(gx)_g=x_e$.
The function $x\mapsto gx$ is continuous.
A Borel probability measure $\mu$ on $\Omega$ is called stationary if $\mu^g=\mu$ for every $g\in G$.

Let $P$ and $Q$ be stationary Borel probability measures on $\Omega=E^G$. 
For any finite subset $F$ of $G$ and sequences $x_F=(x_g)_{g\in F}$ and $y_F=(y_g)_{g\in F}$,
define $\varrho_F(x_F,y_F)=|F|^{-1}\sum_{g\in F}\varrho(x_g,y_g)$.
Define $\bar{\varrho}_F(P,Q)$ by 
\begin{equation}\label{eq:3.0a}
 \bar{\varrho}_F(P,Q) = \inf_{(X_F,Y_F)\sim\Gamma_F\in M(P_F,Q_F)}\mathrm{E}[\varrho_F(X_F,Y_F)],
\end{equation}
where $P_F$ and $Q_F$ are marginal distributions of $P$ and $Q$, respectively. The natural extension of the $\bar\varrho$ distance is defined by
\begin{equation}\label{eq:3.0b}
 \bar{\varrho}(P,Q)
  = \sup_{F\subset G}\bar{\varrho}_F(P,Q),
\end{equation}
where the supremum
is taken over all finite subsets $F$ of $G$.
We also define the stationary coupling distance $\bar\varrho_{\mathrm s}$
\begin{equation}\label{eq:3.0c}
 \bar{\varrho}_{\mathrm s} (P,Q)
 = \inf_{(X,Y)\sim \Gamma\in M_{\rm s}(P,Q)}\mathrm{E}[\varrho(X_e,Y_e)],
\end{equation}
where $M_{\rm s}(P,Q)$ is the set of jointly stationary measures
with marginals $P$ and $Q$.

\cite{Gray-Neuhoff-Shields-1975} showed that $\bar\varrho=\bar\varrho_{\mathrm s}$ if $G=\integer$ (see \eqref{eq:1.5}).
We will prove this equality for general countable groups $G$ under a weak kind of amenability  assumption.
In this section, we denote $\Gamma[\varrho]=\mathrm{E}[\varrho(X_e,Y_e)]$
and $\Gamma[\varrho_F]=\mathrm{E}[\varrho_F(X_F,Y_F)]$ for $\Gamma\in M(P,Q)$.

\begin{lemma} \label{lem:b1}
 $\bar{\varrho}(P,Q)\leq \bar{\varrho}_{\mathrm s} (P,Q)$.
\end{lemma}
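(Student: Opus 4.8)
The plan is to prove the inequality by showing that an arbitrary jointly stationary coupling already dominates each finite-dimensional transport cost $\bar\varrho_F$. Concretely, I would fix $\Gamma\in M_{\rm s}(P,Q)$ with $(X,Y)\sim\Gamma$ and a finite subset $F\subset G$, and aim to establish $\bar\varrho_F(P,Q)\le \Gamma[\varrho]$. Once this holds for every finite $F$, taking the supremum over $F$ yields $\bar\varrho(P,Q)\le\Gamma[\varrho]$ by \eqref{eq:3.0b}, and taking the infimum over $\Gamma\in M_{\rm s}(P,Q)$ gives the claim by \eqref{eq:3.0c}.

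For the first reduction I would restrict the coupling to the coordinates in $F$. Since $\Gamma$ has marginals $P$ and $Q$, its image under the projection $(x,y)\mapsto(x_F,y_F)$ is a coupling $\Gamma_F\in M(P_F,Q_F)$ of the finite-dimensional marginals. Hence $\Gamma_F$ is admissible in the infimum defining $\bar\varrho_F$ in \eqref{eq:3.0a}, and therefore $\bar\varrho_F(P,Q)\le \Gamma[\varrho_F]=|F|^{-1}\sum_{g\in F}\mathrm{E}[\varrho(X_g,Y_g)]$.

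The key step is to evaluate the right-hand side using joint stationarity. Since $(gX,gY)\stackrel{d}{=}(X,Y)$ for every $g\in G$ and the action is $(gx)_h=x_{g^{-1}h}$, the pair at coordinate $h$ satisfies $(X_{g^{-1}h},Y_{g^{-1}h})\stackrel{d}{=}(X_h,Y_h)$; taking $h=e$ and letting $g$ range over $G$ shows $(X_g,Y_g)\stackrel{d}{=}(X_e,Y_e)$ for all $g\in G$. Consequently $\mathrm{E}[\varrho(X_g,Y_g)]=\mathrm{E}[\varrho(X_e,Y_e)]=\Gamma[\varrho]$ for each $g$, so the averaged sum collapses to $\Gamma[\varrho_F]=\Gamma[\varrho]$, giving $\bar\varrho_F(P,Q)\le\Gamma[\varrho]$ as required.

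The argument is essentially direct and I do not anticipate a serious obstacle; the one point deserving care is the coordinate-shift identity $(X_g,Y_g)\stackrel{d}{=}(X_e,Y_e)$, which must be read off correctly from the convention $(gx)_h=x_{g^{-1}h}$ for the group action. Note that this direction of the inequality uses only stationarity of $\Gamma$ and not any amenability of $G$; the amenability hypothesis is what one expects to need only for the reverse inequality $\bar\varrho_{\mathrm s}\le\bar\varrho$.
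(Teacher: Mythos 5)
Your proof is correct and follows essentially the same route as the paper: project the stationary coupling to the coordinates in $F$ to get an admissible element of $M(P_F,Q_F)$, then use joint stationarity to collapse $\Gamma[\varrho_F]$ to $\Gamma[\varrho]$. The paper's version is merely terser (it uses an $\epsilon$-approximate minimizer and states $\Gamma[\varrho_F]=\Gamma[\varrho]$ without spelling out the identity $(X_g,Y_g)\stackrel{d}{=}(X_e,Y_e)$, which you verify explicitly).
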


\begin{proof}
 Fix an arbitrary $\epsilon>0$.
 Take a jointly stationary measure $\Gamma\in M_{\rm s}(P,Q)$
 such that $\Gamma[\varrho]\leq\bar{\varrho}_{\mathrm s} (P,Q)+\epsilon$.
 Then $\bar{\varrho}_F(P,Q)\leq \Gamma[\varrho_F]=\Gamma[\varrho]\leq\bar{\varrho}_{\mathrm s}(P,Q)+\epsilon$.
 Since $F$ and $\epsilon$ are arbitrary, we obtain $\bar{\varrho}(P,Q)\leq \bar{\varrho}_{\mathrm s} (P,Q)$.
\end{proof}

We need a technical lemma.

\begin{lemma} \label{lem:b2}
  Let $G$ be countable and $F\subset G$ be finite.
  Then
  \begin{align*}
 \bar{\varrho}_F(P,Q) = \inf_{\Gamma\in M(P,Q)}\Gamma[\varrho_F].
  \end{align*}
\end{lemma}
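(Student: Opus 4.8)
=== PROOF PROPOSAL ===

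\textbf{Plan.} The claim is that the optimal value of the finite-$F$ coupling problem does not change when we enlarge the feasible set from $M(P_F,Q_F)$, couplings of the marginals on $F$, to $M(P,Q)$, couplings of the full stationary measures on $\Omega=E^G$. Since $\varrho_F$ depends only on the coordinates in $F$, any $\Gamma\in M(P,Q)$ pushes forward to a coupling of $P_F,Q_F$ with the same value $\Gamma[\varrho_F]$, giving the inequality $\bar\varrho_F(P,Q)\le\inf_{\Gamma\in M(P,Q)}\Gamma[\varrho_F]$ immediately. The real content is the reverse inequality: every coupling $\Gamma_F\in M(P_F,Q_F)$ must be \emph{extendable} to a full coupling $\Gamma\in M(P,Q)$ attaining the same value of $\Gamma[\varrho_F]$. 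So the whole proof reduces to an extension (gluing) argument.

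\textbf{Main steps.} First I would reduce to constructing, given any $\Gamma_F\in M(P_F,Q_F)$, some $\Gamma\in M(P,Q)$ whose $(X_F,Y_F)$-marginal is $\Gamma_F$; then $\Gamma[\varrho_F]=\Gamma_F[\varrho_F]$ and taking the infimum over $\Gamma_F$ yields $\inf_{\Gamma\in M(P,Q)}\Gamma[\varrho_F]\le\bar\varrho_F(P,Q)$, closing the loop. To build $\Gamma$, I would disintegrate: write $P=P_F\otimes P(\cdot\mid x_F)$ and $Q=Q_F\otimes Q(\cdot\mid y_F)$ using regular conditional distributions given the coordinates in $F$ (these exist because $E^F$ is Polish, as noted in the excerpt, and $G$ is countable so $\Omega$ is at worst a countable product with a well-behaved Borel structure). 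Then define $\Gamma$ by first drawing $(x_F,y_F)\sim\Gamma_F$ and, conditionally on $(x_F,y_F)$, drawing the remaining $X$-coordinates from $P(\cdot\mid x_F)$ and the remaining $Y$-coordinates from $Q(\cdot\mid y_F)$, independently given $(x_F,y_F)$. By construction the $X$-marginal recovers $P_F\otimes P(\cdot\mid x_F)=P$, the $Y$-marginal recovers $Q$, and the $F$-marginal of the pair is exactly $\Gamma_F$. This is precisely the gluing lemma cited in the proof of Theorem~\ref{theo:2.2} (Appendix~\ref{sec:A}), applied to glue $\Gamma_F$ with the two conditional kernels along the common $x_F$ and $y_F$ variables.

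\textbf{The main obstacle.} The delicate point is the measure-theoretic construction over the possibly uncountable index set $G$: one must ensure that the conditional distributions $P(\cdot\mid x_F)$ and $Q(\cdot\mid y_F)$ exist as regular conditional probabilities and that the resulting $\Gamma$ is a genuine Borel probability measure on $\Omega\times\Omega$ with the correct marginals. The hypothesis that $G$ is countable is exactly what makes this routine: $\Omega=E^G$ is then a countable product of Polish spaces, hence standard Borel, so regular conditional distributions given a sub-$\sigma$-field exist and the gluing/product-kernel construction goes through. I would lean on the standard gluing lemma rather than reprove it, and I expect the only thing to verify carefully is that the $F$-marginal of the glued measure equals $\Gamma_F$ (not merely has marginals $P_F,Q_F$), which is immediate from the construction since $(x_F,y_F)$ is sampled jointly from $\Gamma_F$ before the conditionals are applied.
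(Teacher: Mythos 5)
Your proof is correct and follows essentially the same route as the paper: the paper reduces the lemma to extending any $\Gamma_F\in M(P_F,Q_F)$ to some $\Gamma\in M(P,Q)$ and then simply invokes ``the general extension property of probability measures with given marginals,'' which is exactly the statement you establish. Your disintegration construction --- drawing $(x_F,y_F)\sim\Gamma_F$ and attaching the remaining coordinates via the regular conditional distributions $P(\cdot\mid x_F)$ and $Q(\cdot\mid y_F)$, conditionally independently --- is the standard proof of that extension property, so you have merely supplied the detail the paper leaves implicit.
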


\begin{proof}
  It is sufficient to prove existence of $\Gamma\in M(P,Q)$
  for any $\Gamma_F\in M(P_F,Q_F)$.
  This follows from the general extension property of probability measures with given marginals.
\end{proof}

To establish the equality $\bar{\varrho}=\bar{\varrho}_{\mathrm{s}}$,
we put an additional amenability assumption on $G$. The proof of the following representation theorem follows the lines of the proof of Theorem~1 of \cite{Gray-Neuhoff-Shields-1975}.

\begin{theorem} \label{theo:3.3}
 Let $G$ be a countable group.
 Assume that there exists a sequence $\{F_n\}_{n\geq 0}$ of finite subsets of $G$
 such that $\lim_{n\to\infty}|F_n\cap (h F_n)|/|F_n|=1$ for any $h\in G$.
 Then 
\[
 \bar{\varrho}(P,Q)=\bar{\varrho}_{\mathrm{s}} (P,Q).
\]
\end{theorem}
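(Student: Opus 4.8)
The plan is to prove the nontrivial inequality $\bar{\varrho}_{\mathrm s}(P,Q)\le\bar{\varrho}(P,Q)$, since the reverse direction is already Lemma~\ref{lem:b1}. Following the strategy of \cite{Gray-Neuhoff-Shields-1975}, the idea is to manufacture a genuinely jointly stationary coupling out of near-optimal couplings on the Følner sets $F_n$ by symmetrizing over group translates and passing to a limit. The Følner hypothesis is exactly what forces the symmetrized couplings to become asymptotically stationary, while the boundary defect $|F_n\setminus hF_n|/|F_n|$ is what has to be controlled.

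First I would assemble the ingredients. For each $n$, Lemma~\ref{lem:b2} gives $\bar{\varrho}_{F_n^{-1}}(P,Q)=\inf_{\Gamma\in M(P,Q)}\Gamma[\varrho_{F_n^{-1}}]$, so, since $\bar{\varrho}_{F_n^{-1}}(P,Q)\le\bar{\varrho}(P,Q)$, I may choose $\Gamma_n\in M(P,Q)$ with $\Gamma_n[\varrho_{F_n^{-1}}]\le\bar{\varrho}(P,Q)+1/n$. I then symmetrize by setting $\tilde{\Gamma}_n=|F_n|^{-1}\sum_{g\in F_n}g_*\Gamma_n$, where $g_*$ denotes the pushforward under the diagonal action of $g$ on $\Omega\times\Omega$. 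Because $P$ and $Q$ are stationary, each $g_*\Gamma_n$, and hence $\tilde{\Gamma}_n$, still lies in $M(P,Q)$. Using the identity $(g_*\Gamma_n)[\varrho]=\mathrm{E}_{\Gamma_n}[\varrho(X_{g^{-1}},Y_{g^{-1}})]$ (which is also the reason the index set $F_n^{-1}$ appears), a direct computation yields $\tilde{\Gamma}_n[\varrho]=\Gamma_n[\varrho_{F_n^{-1}}]\le\bar{\varrho}(P,Q)+1/n$, so the symmetrized couplings have the correct cost.

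Next I would verify asymptotic stationarity and extract a limit. For fixed $h\in G$ one has $h_*\tilde{\Gamma}_n=|F_n|^{-1}\sum_{k\in hF_n}k_*\Gamma_n$, so after cancelling the common terms,
\[
\|h_*\tilde{\Gamma}_n-\tilde{\Gamma}_n\|_{\mathrm{TV}}\le\frac{|hF_n\,\triangle\,F_n|}{|F_n|}=2\Big(1-\frac{|hF_n\cap F_n|}{|F_n|}\Big)\longrightarrow 0
\]
by the Følner hypothesis. To pass to the limit, note that every $\tilde{\Gamma}_n$ has the fixed marginals $P,Q$, so on each finite coordinate block the two-sided laws form a tight family (tightness of the couplings follows from tightness of $P_F$ and $Q_F$). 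Exhausting the countable group $G$ by an increasing sequence of finite sets and running a diagonal argument, I extract a subsequence along which all finite-dimensional distributions converge weakly; by Kolmogorov's extension theorem the consistent limits define a probability measure $\Gamma$ on $\Omega\times\Omega$. Weak convergence of the finite-dimensional laws preserves the one-sided marginals, so $\Gamma\in M(P,Q)$; the total-variation estimate forces $h_*\Gamma=\Gamma$ for every $h$, whence $\Gamma\in M_{\mathrm s}(P,Q)$; and lower semicontinuity of $(x,y)\mapsto\varrho(x_e,y_e)$ under weak convergence gives $\Gamma[\varrho]\le\liminf_k\tilde{\Gamma}_{n_k}[\varrho]\le\bar{\varrho}(P,Q)$. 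Hence $\bar{\varrho}_{\mathrm s}(P,Q)\le\Gamma[\varrho]\le\bar{\varrho}(P,Q)$, which with Lemma~\ref{lem:b1} yields the claimed equality.

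The main obstacle I anticipate is this final limit step, precisely because $\Omega=E^G$ is not Polish and Prokhorov's theorem is unavailable on the whole space. The remedy is to argue marginal-by-marginal: tightness holds on each finite coordinate block since the one-sided marginals are the fixed measures $P_F,Q_F$, and the countability of $G$ permits a diagonal extraction followed by Kolmogorov extension. A secondary technical point is that $\varrho$ may be unbounded, so the cost inequality in the limit must be obtained through lower semicontinuity --- truncating $\varrho$ at a level $M$ and letting $M\to\infty$ --- rather than from plain weak convergence of integrals.
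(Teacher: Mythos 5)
Your proposal is correct and follows essentially the same route as the paper: choose near-optimal couplings for the finite-block costs, average them over the F\o lner sets $F_n$ (your $g_*$/$F_n^{-1}$ bookkeeping is just the mirror-image convention of the paper's $\bar\Gamma_n(A)=|F_n|^{-1}\sum_{g\in F_n}\Gamma_n(gA)$ and is internally consistent), verify the marginals are preserved by stationarity of $P$ and $Q$, use the F\o lner hypothesis to get asymptotic invariance, and pass to a weak limit, finishing with lower semicontinuity of the cost and Lemma \ref{lem:b1}. The one place you deviate is the compactness step, and there your anticipated ``main obstacle'' is not actually present: since $G$ is countable, $\Omega=E^G$ is a countable product of Polish spaces and hence Polish (the paper's caveat about $\Omega$ failing to be Polish concerns uncountable index sets), so every Borel probability measure on $\Omega\times\Omega$ is tight and Prokhorov's theorem applies directly to $\{\tilde\Gamma_n\}$ --- this is exactly what the paper does. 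Your substitute (tightness block-by-block, a diagonal extraction of finite-dimensional weak limits, and Kolmogorov extension of the consistent family) is valid and proves the same compactness by hand, so nothing breaks; it is simply unnecessary extra work prompted by a false premise.
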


\begin{proof}
 Fix $\epsilon>0$.
 For each $n\geq 0$, choose a measure $\Gamma_n\in M(P,Q)$
 such that $\Gamma_n[\varrho_{F_n}]\leq \bar{\varrho}_{F_n}+\epsilon$ (see Lemma \ref{lem:b2}).
 Define measures $\bar{\Gamma}_n$ by
 \begin{align*}
  \bar{\Gamma}_n(A)
  &= \frac{1}{|F_n|}\sum_{g\in F_n}\Gamma_n(gA).
 \end{align*}
 Note that $\bar{\Gamma}_n[\varrho]=\Gamma_n[\varrho_{F_n}]$.
 The first marginal measure of $\bar{\Gamma}_n$ is
 \begin{align*}
  \bar{\Gamma}_n(A_1\times \Omega)
  &= \frac{1}{|F_n|}\sum_{g\in F_n}
  \Gamma_n(g(A_1\times \Omega))
  = \frac{1}{|F_n|}\sum_{g\in F_n}P(gA_1)
  = P(A_1),
 \end{align*}
 since $P$ is stationary.
 Similarly, the second marginal measure of $\bar{\Gamma}_n$ is $Q$.
 Hence $\bar{\Gamma}_n\in M(P,Q)$.
 Since $P$ and $Q$ are tight measures, the sequence $\{\bar{\Gamma}_n\}_{n\geq 0}$
 is tight and therefore has a subsequence converging weakly.
 We assume without loss of generality that $\{\bar{\Gamma}_n\}_{n\geq 0}$ itself  converges weakly to a measure $\bar{\Gamma}$.
 Then $\bar{\Gamma}\in M(P,Q)$. Furthermore, $\bar\Gamma$ is stationary, i.e. $\bar\Gamma\in M_{\mathrm{s}} (P,Q)$. Indeed,
 for any $h\in G$ and measurable $A\subset \Omega^2$, we have
 \begin{align*}
  \bar{\Gamma}_n(hA)
  &= \frac{1}{|F_n|}\sum_{g\in F_n}\Gamma_n(ghA)
  \\
  &= \frac{1}{|F_n|}\sum_{g\in F_n\cap (h F_n)}\Gamma_n(gA) + \mathrm{o}(1)
  \\
  &= \bar{\Gamma}_n(A) + \mathrm{o}(1),
 \end{align*}
 where we used $\lim_{n\to\infty}|F_n\cap (h F_n)|/|F_n|=1$.
 This implies stationarity of $\bar{\Gamma}$.
 Finally,
 \begin{align*}
  \bar{\varrho}_{\mathrm{s}}
  &\leq \bar{\Gamma}[\varrho]
  \leq \limsup_{n\to\infty}\bar{\Gamma}_n[\varrho]
  = \limsup_{n\to\infty}\Gamma_n[\varrho_{F_n}]
  \leq \limsup_{n\to\infty}\bar{\varrho}_{F_n} + \epsilon
  \leq \bar{\varrho}+\epsilon.
 \end{align*}
 Since $\epsilon$ is arbitrary, we have $\bar{\varrho}_{\mathrm{s}} \leq\bar{\varrho}$.
\end{proof}

\begin{Remark}\label{rem:3.4}
\begin{enumerate}
 \item 
 For the example $G=\mathbb{Z}^d$, we can take $F_n=\{-n,\ldots,n\}^d$.
 On the other hand, if $G$
 is the free group generated by two elements $f_1,f_2\neq e$,
 then there does not exist a sequence $\{F_n\}$ satisfying the amenability condition
 because the neighboring set $(f_1F_n\cup f_2F_n\cup f_1^{-1}F_n\cup f_2^{-1}F_n)\setminus F_n$
 has at least $2|F_n|+2$ elements.
\item 
The above given proof extends directly to the case of compact groups where $\bar\Gamma_n$ is defined via integration w.r.t. the normalized Haar measure. An extension of the representation result to general amenable groups on product spaces seems possible, but there are still some technical problems. 
Instead we will give an extension to amenable transformation groups acting on Polish function spaces. 
\end{enumerate}
\end{Remark}

Let $(G,\mathcal G)$ be a group of measurable transformations acting on a Polish space $(B,\varrho)$ of real functions on $E$ and let $P$, $Q$ be stationary probability measures on $B$, i.e. $P^g=P$, $Q^g=Q$, $\forall g\in G$. We assume that $G$ is an amenable group, i.e. there exists a sequence $\lambda_n$ of asymptotically left invariant probability measures on $G$ such that 
\begin{equation}\label{eq:rue-1}
 \lambda_n(gA)-\lambda_n(A)\to 0, \quad\forall A\in \mathcal G.
\end{equation}
The hypothesis of amenability is central for example in the theory of invariant tests. Many of the standard transformation groups are amenable. A typical exception is the free group of two generators. The Ornstein distance can be extended to this class of stationary random fields as follows. Define the average distance w.r.t. $\lambda_n$ by
\begin{equation}\label{eq:rue-2}
 \varrho_n(x,y):=\int \varrho(gx,gy)\lambda_n(dg).
\end{equation}

The induced minimal probability metric is given by 
\begin{equation}\label{eq:rue-3}
 \bar\varrho_n(P,Q) = \inf\{E\varrho_n(X,Y)\mid (X,Y)\sim\Gamma\in M(P,Q) \}.
\end{equation}
Finally, the natural extension of the $\bar\varrho$ metric of \cite{Gray-Neuhoff-Shields-1975} is defined as 
\begin{equation}\label{eq:rue-4}
 \bar\varrho(P,Q) = \sup_n \bar\varrho_n (P,Q).
\end{equation}

\begin{Remark}\label{rem:3.5}
 In the particular case when $G$ is countable and $\lambda_n=\frac{1}{|F_n|} \sum_{g\in F_n} \varepsilon_g$ for some increasing class of finite sets $F_n\subset G$ we can take the product space $B=E^G$ and we obtain $\varrho_n(x,y)=\frac{1}{|F_n|} \sum_{g\in F_n} \varrho(gX_g, gY_g)$ and 
\begin{equation}\label{eq:rue-5}
 \bar\varrho_n(P,Q) = \inf\{E\varrho_n(X_{F_n}, Y_{F_n}\mid (X_{F_n}, Y_{F_n})\sim\Gamma_{F_n} \in M(P_{F_n}, Q_{F_n})\}
\end{equation}
with $X_{F_n}=(gX)_{g\in F_n}=:\pi_{F_n}(X)$,  $Y_{F_n}=(gY)_{g\in F_n} = \pi_{F_n}(Y)$. Thus $\bar\varrho_n$ depends only on the \emph{finite dimensional projections} $P_{F_n}=P^{\pi_{F_n}}$, $Q_{F_n}=Q^{\pi_{F_n}}$ of $P$, $Q$ and we include the previous framework. Amenability of $G$ corresponds to the condition that $F_n$ is asymptotically left invariant in the sense that
\begin{equation}\label{eq:rue-6}
 |F_n\cap (hF_n)|/|F_n| \to 1,\quad\forall h\in G,
\end{equation}
i.e. to the condition in Theorem \ref{theo:3.3}.
\end{Remark}

The optimal stationary coupling problem is introduced similarly as in Section \ref{sec:2} by
\begin{equation}\label{eq:rue-7}
 \bar\varrho_s(P,Q)=\inf\{E[\varrho(eX,eY)]\mid (X,Y)\sim \Gamma\in M_s(P,Q)\}
\end{equation}
where $M_s(P,Q)=\{\Gamma\in M^1(B\times B)\mid \Gamma^{(g,g)}=\Gamma,\enskip \forall g\in G\}$ is the class of jointly stationary measures with marginals $P$, $Q$ and $e$ is the neutral element of $G$. We use the notation $\Gamma(\varrho)=E[\varrho(eX,eY)]$ and $\Gamma_n(\varrho)=E[\varrho_n(X,Y)]$ for $\Gamma\in M(P,Q)$.

We now can state an extension of the Gray--Neuhoff--Shields representation result for the $\bar\varrho$ distance of stationary random fields to amenable groups.

\begin{theorem}[General representation result for \boldmath$\bar\varrho$ distance]%
\label{theo:rue-1}
~  Let $G$ be an amenable\linebreak group acting on a Polish function space $B$ on $E$, let $P$, $Q$ be stationary integrable probability measures on $B$, i.e. for $X\stackrel{d}{=} P$, $E\varrho(X,y)<\infty$ for $y\in E$. Then the extended Ornstein distance $\bar\varrho$ defined in \eqref{eq:rue-4} coincides with the optimal stationary coupling distance $\bar\varrho_{\mathrm s}$, 
\[
\bar\varrho(P,Q)=\bar\varrho_{\mathrm s}(P,Q).                                                                                                                                                                                                                                                                                                                                      \]
In particular, $\bar\varrho$ does not depend on choice of $\lambda_n$.
\end{theorem}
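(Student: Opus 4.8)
**

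The plan is to adapt the two-inequality strategy already used in the countable case (Theorem~\ref{theo:3.3}), now phrased in terms of the averaging measures $\lambda_n$ rather than the finite sets $F_n$. The easy inequality $\bar\varrho(P,Q)\le\bar\varrho_{\mathrm s}(P,Q)$ goes exactly as in Lemma~\ref{lem:b1}: given any jointly stationary $\Gamma\in M_{\mathrm s}(P,Q)$, stationarity forces $\Gamma_n(\varrho)=\int\Gamma(\varrho(g\cdot,g\cdot))\,\lambda_n(dg)=\Gamma(\varrho)$ for every $n$, so $\bar\varrho_n(P,Q)\le\Gamma(\varrho)$, and taking the supremum over $n$ and then the infimum over $\Gamma$ yields the bound. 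The substance of the theorem is the reverse inequality $\bar\varrho_{\mathrm s}(P,Q)\le\bar\varrho(P,Q)$.

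For that direction I would fix $\epsilon>0$, choose for each $n$ a coupling $\Gamma_n\in M(P,Q)$ that nearly realizes $\bar\varrho_n$, i.e. $\Gamma_n(\varrho_n)\le\bar\varrho_n(P,Q)+\epsilon\le\bar\varrho(P,Q)+\epsilon$, and then symmetrize it by setting $\bar\Gamma_n(A)=\int\Gamma_n(gA)\,\lambda_n(dg)$ (the continuous-group analogue of the finite average used in Theorem~\ref{theo:3.3}). Three things must then be checked. First, $\bar\Gamma_n\in M(P,Q)$: the marginals are preserved because $P$, $Q$ are stationary and the $\lambda_n$ are probability measures, so $\bar\Gamma_n(A_1\times B)=\int P(gA_1)\,\lambda_n(dg)=P(A_1)$. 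Second, the cost identity $\bar\Gamma_n(\varrho)=\Gamma_n(\varrho_n)$ holds by Fubini and the definition \eqref{eq:rue-2} of $\varrho_n$, so $\bar\Gamma_n(\varrho)\le\bar\varrho(P,Q)+\epsilon$ uniformly in $n$. Third, integrability of $\varrho(eX,eY)$ under $\bar\Gamma_n$ is guaranteed by the integrability hypothesis on $P$, $Q$ together with the triangle inequality, which keeps the relevant functionals well defined and lower semicontinuous.

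Next I would extract a weak limit. Since $P$ and $Q$ are tight on the Polish space $B$, the family $\{\bar\Gamma_n\}$ is tight on $B\times B$, so along a subsequence $\bar\Gamma_n\rightharpoonup\bar\Gamma$, and the marginal property passes to the limit, giving $\bar\Gamma\in M(P,Q)$. The crucial point is that $\bar\Gamma$ is jointly stationary: for each fixed $h\in G$ one computes $\bar\Gamma_n(hA)-\bar\Gamma_n(A)=\int[\lambda_n(g^{-1}hA\text{-shift})-\cdots]\to0$ by replacing the finite-set estimate $|F_n\cap hF_n|/|F_n|\to1$ with the asymptotic left-invariance \eqref{eq:rue-1} of $\lambda_n$; this $\mathrm{o}(1)$ control survives the weak limit, so $\bar\Gamma^{(h,h)}=\bar\Gamma$ and hence $\bar\Gamma\in M_{\mathrm s}(P,Q)$. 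Finally, lower semicontinuity of $\Gamma\mapsto\Gamma(\varrho)$ under weak convergence gives $\bar\varrho_{\mathrm s}(P,Q)\le\bar\Gamma(\varrho)\le\liminf_n\bar\Gamma_n(\varrho)\le\bar\varrho(P,Q)+\epsilon$, and letting $\epsilon\to0$ closes the argument; the independence from the choice of $\lambda_n$ is then automatic since the limit equals the intrinsically defined $\bar\varrho_{\mathrm s}$.

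The main obstacle, and the place where the countable proof does not transcribe verbatim, is the stationarity of the limit $\bar\Gamma$ in the genuinely group-theoretic (non-lattice) setting. In Theorem~\ref{theo:3.3} the error term was handled by the combinatorial symmetric-difference estimate on finite sets; here one must instead push the measure-level asymptotic invariance \eqref{eq:rue-1} through the averaging operator and verify that $\bar\Gamma_n(hA)-\bar\Gamma_n(A)\to0$ for a separating class of sets $A$, on a product space $\Omega=B$ that need not itself be Polish in full generality. I expect the delicate technical work to lie in justifying the interchange of the weak limit with the group averaging and in arguing that testing invariance on finitely many coordinates suffices to conclude full joint stationarity; everything else follows the Gray--Neuhoff--Shields template closely.
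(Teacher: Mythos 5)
Your proposal is correct and takes essentially the same route as the paper's proof: the easy inequality via stationarity of $\Gamma\in M_{\mathrm s}(P,Q)$, the averaged couplings $\bar\Gamma_n(A)=\int_G\Gamma_n(gA)\,\lambda_n(dg)$, preservation of marginals by stationarity of $P,Q$, tightness and extraction of a weak limit, joint stationarity of the limit from the asymptotic left invariance \eqref{eq:rue-1}, and lower semicontinuity of $\Gamma\mapsto\Gamma(\varrho)$ to close the bound. Your concluding worry about a non-Polish product space is moot here, since the theorem assumes the measures live on a Polish function space $B$, which is exactly what makes the tightness and weak-limit steps go through as in the countable case.
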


\begin{proof}
 To prove that $\bar\varrho(P,Q)\le \bar\varrho_{\mathrm s}(P,Q)$ let for $\varepsilon>0$ given $\Gamma\in M_{\mathrm s}(P,Q)$ be such that $\Gamma(\varrho)\le \bar\varrho_{\mathrm s}(P,Q)+\varepsilon$. Then using the integrability assumption and stationary of $\Gamma$ we obtain for all $n\in\mathbb{N}$
\begin{eqnarray*}
 \bar\varrho_n(P,Q) &\le& \Gamma_n(\varrho) = E\int\varrho(gX, gY) \lambda_n(dg)\\
&=& \int E \varrho(gX,gY)\lambda_n(dg) = \Gamma(\varrho)\le\bar\varrho_{\mathrm s}(P,Q)+\varepsilon.
\end{eqnarray*}
This implies that $\bar\varrho(P,Q)\le\bar\varrho_{\mathrm s}(P,Q)$.

For the converse direction we choose for fixed $\varepsilon > 0$ and $n\ge 0$ an element $\Gamma_n\in M(P,Q)$ such that $\Gamma_n(\varrho)\le \bar\varrho_n(P,Q)+\varepsilon$. We define probability measures  $\{\bar\Gamma_n\}$ by
\begin{equation}\label{rue-8}
 \bar\Gamma_n(A):=\int_G \Gamma_n(gA)d\lambda_n (g).
\end{equation}
Then using the integrability condition and amenability of $G$ we obtain that
\begin{equation}\label{rue-9}
 \bar\Gamma_n(gA)-\bar\Gamma_n(A)=\int_G(\Gamma_n(gA)-\Gamma_n(A))\lambda_n (dg) \to 0,
\end{equation}
i.e. $\bar\Gamma_n$ is asymptotically left invariant on $ B \times B $.

By definition $\bar\Gamma_n\in M(P,Q)$, just take projections on finite components of $\bar\Gamma_n$
\begin{eqnarray*}
 \bar\Gamma_n(A_1\times\Omega)&=& \int_G \Gamma_n(gA_1\times\Omega)\lambda_n(dg)\\
&=& \int_G P(gA_1)\lambda_n (dg) = P(A_1)
\end{eqnarray*}
since $P$ is stationary. Using tightness of $\{\bar\Gamma_n\}$ we get a weakly converging subsequence of $\{\bar\Gamma_n\}$. W.l.g. we assume that $\{\bar\Gamma_n\}$ converges weakly to some probability measure  $\bar\Gamma$ on $ B \times  B $. In consequence by  \eqref{rue-9} we get $\bar\Gamma\in M_{\mathrm s}(P,Q)$. Finally,
\begin{eqnarray*}
 \bar\varrho_{\mathrm s} (P,Q) &\le& \bar\Gamma(\varrho) \le \lim\sup \bar\Gamma_n(\varrho) \\
&\le& \limsup \bar\varrho_n(P,Q)+\varepsilon \le \bar\varrho(P,Q)+\varepsilon
\end{eqnarray*}
for all $\varepsilon>0$ which concludes the proof.
\end{proof}

Motivated by the representation results in Theorem \ref{theo:3.3}, \ref{theo:rue-1} we now consider the optimal stationary coupling problem for general groups $G$ acting on $E=\real$.
Let $F$ be a finite subset of $ G $ and let $f:\real^F\to\real$ be a convex function.
The function $f$ is naturally identified with a function on $\Omega$ by $f(x)=f((x_g)_{g\in F})$.
As in Section \ref{sec:2} any choice of the subgradient of $f$ is denoted by $((\partial_g f)(x))_{g\in F}$.
Define an equivariant Borel measurable function $S:\Omega\to\Omega$ by the shifted sum of gradients
\begin{equation}
\label{eq:3.1}
 S_e(x) = \sum_{g\in F}(\partial_gf)(gx)
 \quad \mbox{ and } \quad
 S_h(x) = S_e(h^{-1}x), h\in G.
\end{equation}
Note that $S_e(x)$ depends only on $(x_g)_{g\in G(F)}$, where $G(F)$ is the subgroup generated by $F$ in $ G $.
We have $S\circ g=g\circ S$ for any $g\in G $ because
\[
S_h(gx)=S_e(h^{-1}gx)=S_{g^{-1}h}(x)=(gS(x))_h.
\]
Hence if $X$ is a stationary random field, then $(X,S(X))$ is a jointly stationary random field.

We obtain the following theorem.

\begin{theorem}\label{theo:3.1}
Let $P$, $Q$ be stationary random field probability measures with respect to a general group of measurable transformations $G$. 
 Let $S$ be an equivariant map as defined in \eqref{eq:3.1} with a convex function $f$.
 Let $X$ be a real stationary random field with law $\mu$ and assume that $X_e$ and $(\partial_gf(X))_{g\in F}$ are in $L^2(\mu)$.
 Then  $(X,S(X))$ is an optimal stationary coupling  w.r.t. squared distance between $\mu$ and $\mu^S$, i.e.
 \[
  E(X_e-S_e(X))^2 = \min_{(X,Y)\sim \Gamma\in M_{\rm s}(\mu, \mu^S)}\mathrm{E}[(X_e-Y_e)^2].
 \]
\end{theorem}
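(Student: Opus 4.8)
The plan is to mimic the structure of the proofs of Theorems~\ref{theo:2.2} and~\ref{theo:2.3}, replacing the index set $\{0,\dots,n-1\}$ and the shift $L$ by the finite set $F\subset G$ and the general group action $g\mapsto gx$. First I would fix an arbitrary competitor $\Gamma\in M_{\mathrm s}(\mu,\mu^S)$ and invoke the gluing lemma (Appendix~\ref{sec:A}) to realize a jointly stationary triple $(X,Y,\tilde X)$ on one probability space with $X\stackrel{d}{=}\mu$, $Y=S(X)$, and $(\tilde X,Y)\sim\Gamma$, so that in particular $\tilde X\stackrel{d}{=} X\stackrel{d}{=}\mu$ while $Y_e=S_e(X)\in L^2$ by the square-integrability hypothesis on $(\partial_g f(X))_{g\in F}$. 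The square-integrability of $X_e$ then guarantees that all the inner products below are well defined.

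Next I would carry out the same algebraic reduction as before. Since the two couplings share the marginal law of the second coordinate and of the first coordinate, expanding the squares gives
\[
A:=\tfrac12\,E\big[(X_e-Y_e)^2-(\tilde X_e-Y_e)^2\big]
 = E\big[(\tilde X_e-X_e)\,S_e(X)\big]
 = E\Big[(\tilde X_e-X_e)\sum_{g\in F}(\partial_g f)(gX)\Big],
\]
using only the identity $S_e(X)=\sum_{g\in F}(\partial_g f)(gX)$ and that the quadratic and the $\tilde X_e^2$ terms cancel against $X_e^2$ after taking expectations (here $\tilde X_e\stackrel{d}{=} X_e$). The remaining task is to show $A\le 0$, which combined with optimality of $(\tilde X,Y)$ being arbitrary yields that $(X,Y)$ has the smallest possible transport cost.

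The crucial step, and the one I expect to be the main obstacle, is re-indexing the sum by the group action to bring it into the form where the subgradient inequality applies. In the $G=\integer$ case this was the passage from $\sum_k(\tilde X_0-X_0)(\partial_k f)(X_{-k},\dots)$ to $\sum_k(\tilde X_k-X_k)(\partial_k f)(X_0,\dots,X_{n-1})$, justified by stationarity of $X$ (equivalently, by shifting each summand). Here the analogue should follow from the joint stationarity of $(X,\tilde X)$ under the $G$-action: for each fixed $g\in F$ one shifts the $g$-th summand by $g$ so that the argument of $\partial_g f$ becomes the untranslated configuration $X$ (i.e.\ $(X_h)_{h\in F}$) at the cost of replacing the factor $\tilde X_e-X_e$ by $(\tilde X)_g-(X)_g$. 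I would verify this termwise using $(gX)_g=X_e$ and the invariance of the joint law of $(X,\tilde X)$ under $g$, obtaining
\[
A = E\Big[\sum_{g\in F}\big((\tilde X)_g-(X)_g\big)\,(\partial_g f)(X)\Big]
  \le E\big[f(\tilde X_F)-f(X_F)\big]=0,
\]
where the inequality is the defining subgradient inequality for the convex $f$ applied to the two configurations $\tilde X_F$ and $X_F$ (which have the same law, forcing the expectation of the difference to vanish), and integrability of $f(X_F)$ follows from convexity together with the $L^2$-assumptions exactly as in Theorem~\ref{theo:2.2}. The only genuine care needed is checking that the group-translation re-indexing is legitimate when $F$ generates a possibly noncommutative subgroup $G(F)$; since each term is shifted independently and the subgradient is evaluated at the common point $X$, noncommutativity does not interfere, and the argument closes.
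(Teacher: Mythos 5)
Your proposal is correct and follows essentially the same route as the paper's proof: gluing, expansion of the squares, termwise re-indexing by the group action via joint stationarity of $(X,\tilde X)$, and the subgradient inequality for the convex $f$, with $E[f(\tilde X_F)]=E[f(X_F)]$ closing the argument. The only detail the paper adds is that the gluing lemma is applied after restricting all processes to the countable subgroup $G(F)$ generated by $F$ (so that the index set is countable and the product space is Polish), rather than on all of $\real^G$; your argument is unaffected by this since every quantity involved depends only on coordinates in $G(F)$.
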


\begin{proof}
 The construction of the equivariant mapping in \eqref{eq:3.1} and the following remark allow us to transfer the proof of Theorem \ref{theo:2.3} to the class of random field models. 
 Fix $\Gamma\in M_{\rm s}(\mu,\mu^S)$.
 Let $G(F)$ be the subgroup generated by $F$ in $ G $.
 Then $G(F)$ is countable (or finite).
 We denote the restricted measure of $\mu$ on $\real^{G(F)}$ by $\mu|_{G(F)}$.
 By the gluing lemma,
 we can consider a jointly stationary random field $(X_g,Y_g,\tilde{X}_g)_{g\in G(F)}$
 on a common probability space
 such that $(X_g)_{g\in G(F)}\sim \mu|_{G(F)}$, $Y_g=S_g(X)$ and
 $(\tilde{X}_g,Y_g)_{g\in G(F)}\sim \Gamma|_{G(F)}$.
 Then we have
 \begin{eqnarray*}
  \frac{1}{2}\mathrm{E}[(X_e-S_e(X))^2-(\tilde{X}_e-S_e(X))^2] &=& \mathrm{E}[S_e(X)(\tilde{X}_e-X_e)]
  \\
  &=& \sum_{g\in F}\mathrm{E}\big[
  \big((\partial_gf)(gX)\big) (\tilde{X}_e-X_e)
  \big]
  \\
  &=& \sum_{g\in F}\mathrm{E}\big[
  \big((\partial_gf)(X)\big) (\tilde{X}_g-X_g)
  \big]
  \\
  &\le& \mathrm{E}[f(\tilde{X})-f(X)]
  \\
  &=& 0.
 \end{eqnarray*}
This implies that $(X,S(X))$ is an optimal stationary coupling w.r.t. squared distance between the random fields $\mu$ and $\mu^S=S_\#\mu$.
\end{proof}

\section{Optimal stationary couplings for general cost functions}
\label{sec:4}

The Monge--Kantorovich problem and the related characterization of optimal couplings have been generalized to general cost functions $c(x,y)$ in \cite{Rueschendorf-1991, Rueschendorf-1995}, while \cite{McCann-2001} extended the squared loss case to manifolds; see also the surveys in \cite{Rachev-Rueschendorf-1998} and \cite{Villani-2003,Villani-2009}. 
Based on these developments we will extend the optimal stationary coupling results in Sections \ref{sec:2}, \ref{sec:3} to more general classes of distance functions. Some of the relevant notions from transportation theory are collected in the Appendix \ref{sec:C}. We will restrict to the case of time parameter $\integer$. As in Section \ref{sec:3} an extension to random fields with general \emph{time} parameter is straightforward.

Let $E_1, E_2$ be Polish spaces. and let $c:E_1\times E_2\to\real$ be a measurable cost function. For $f:E_1\to\real$ and $x_0\in E_1$ let
\begin{equation}\label{eq:4.1}
 \partial^c f(x_0) = \big\{y_0\in E_2\mid c(x_0,y_0)-f(x_0)= \inf_{z_0\in E_1} \{c(z_0,y_0)-f(z_0)\} \big\}
\end{equation}
denote the set of \emph{$c$-supergradients} of $f$ in $x_0$.

A function $\varphi:E_1\to\real\cup\{-\infty\}$ is called \emph{$c$-concave} if there exists a function\linebreak $\Psi:E_2\to\real\cup\{-\infty\}$ such that
\begin{equation}\label{eq:4.2}
 \varphi(x)=\inf_{y\in E_2} (c(x,y)-\Psi(y)),\quad\forall x\in E_1.
\end{equation}
If $\varphi(x)=c(x,y_0)-\Psi(y_0)$, then $y_0\in\partial^c\varphi(x)$ is a $c$-supergradient of $\varphi$ at $x$. For squared distance $c(x,y)=\|x-y\|_2^2$ in $\real^m=E_1=E_2$ $c$-concavity of $\varphi$ is equivalent to the concavity of $\varphi-\|x\|_2^2/2$.

The characterization of optimal couplings $T(x)\in\partial^c\varphi(x)$ for some $c$-concave function $\varphi$ leads for regular $\varphi$ to a differential characterization of $c$-optimal coupling functions $T$
\begin{equation}\label{eq:4.3}
 \nabla_x c(x,T(x))=\nabla \varphi(x)
\end{equation}
see \cite{Rueschendorf-1991}, \cite{Villani-2009}. In case \eqref{eq:4.3} has a unique solution in $T(x)$ this equation describes optimal $c$-coupling functions $T$ in terms of differentials  of $c$-concave functions $\varphi$ and the set of $c$-supergradients $\partial^c \varphi(x)$ reduces to just one element
\begin{equation}\label{eq:4.4}
 \partial^c\varphi(x)=\{x-\nabla_x c^*(x,\varphi(x))\}.
\end{equation}

Here $c^*$ is the Legendre transform of $c$ and $\nabla_x c(x,\cdot)$ is invertible and $(\nabla_xc)^{\kern-.6pt -1}(x,\varphi(x))$ $=\nabla_x c^*(x,\varphi(x))$ (see \cite{Rueschendorf-1991, Rachev-Rueschendorf-1998} and \cite{Villani-2003, Villani-2009}). For functions $\varphi$ which are not $c$-concave, the supergradient $\partial^c\varphi(x)$ may be empty.

The construction of optimal stationary $c$-couplings of stationary processes can be pursued in the following way. Define the average distance per component $c_n:E_1^n\times E_2^n\to\real$ by
\begin{equation}\label{eq:4.5}
 c_n(x,y)=\frac1n \sum_{t=0}^{n-1} c(x_t,y_t)
\end{equation}
and assume that for some function $f:E_1^n\to\real$, there exists a function $F^n:E_1^n\to E_2^n$ such that
\begin{equation}\label{eq:4.6}
 F^n(x)=(F_k(x))_{0\le k\le n-1} \in \partial^{c_n} f(x),\quad x\in E_1^n.
\end{equation}
Note that \eqref{eq:4.6} needs to be satisfied only on the support of (the projection of) the stationary measure $\mu$. In general we can expect $\partial^{c_n} f(x)\not=\emptyset$, $\forall x\in E_1^n$ only if $f$ is $c_n$-concave. 
For fixed $y_0,\dots, y_{n-1} \in E_2$ we introduce the function $h_c(x_0)=\frac1n \sum_{k=0}^{n-1} c(x_0,y_k)$, $x_0\in E_1$. $h_c(x)$ describes the average distance of $x_0$ to the $n$ points $y_0, \dots, y_{n-1}$ in $E_2$. We define an equivariant map $S:E_1^\integer\to E_2^\integer$ by
\begin{equation}\label{eq:4.7}
 \begin{split}
  S_0(x) &\in \partial^c (h_c(x_0))\mid_{y_k=F_k(x_{-k},\dots, x_{-k+n-1} ),0\leq k\leq n-1}\\
S_t(x) &= S_0(L^{-t}x), \quad S(x)=(S_t(x))_{t\in\integer}.
 \end{split}
\end{equation}
Here the $c$-supergradient is taken for the function $h_c(x_0)$ and the formula is evaluated at $y_k=F_k(x_{-k},\dots,x_{-k+n-1})$, $0\le k\le n-1$. After these preparations we can state the following theorem.

\begin{theorem}[Optimal stationary \boldmath $c$-couplings of stationary processes]
\label{theo:4.1}
 Let $X=$ \linebreak $(X_t)_{t\in\integer}$ be a stationary process with values in $E_1$ and with distribution $\mu$, let $c:E_1\times E_2 \to\real$ be a measurable distance function on $E_1\times E_2$ and let $f:E_1^n\to\real$ be measurable $c_n$-concave. If $S$ is the equivariant map induced by $f$ in \eqref{eq:4.7} and if $c(X_0,S_0(X))$, $\{c(X_k,F_k(X^n))\}_{k=0}^{n-1}$ and $f(X^n)$ are integrable, then $(X,S(X))$ is an optimal stationary $c$-coupling of the stationary measures $\mu$, $\mu^S$ i.e.
\begin{equation}\label{eq:4.8}
 Ec(X_0,S_0(X))=\inf\{Ec(Y_0,Z_0)\mid (Y,Z)\sim\Gamma\in M_{\rm s}(\mu,\mu^S)\} = \bar c_{\rm s} (\mu,\mu^S).
\end{equation}
\end{theorem}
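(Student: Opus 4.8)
The plan is to mirror the proof of Theorem~\ref{theo:2.3}, replacing the quadratic inequality coming from convexity by the $c$-concavity inequality coming from the definition of $c$-supergradients. First I would fix an arbitrary competitor $\Gamma\in M_{\rm s}(\mu,\mu^S)$ and, by the gluing lemma (Appendix~\ref{sec:A}), construct on one probability space a jointly stationary triple $(X,Y,\tilde X)$ with $X\sim\mu$, $Y=S(X)$, and $(\tilde X,Y)\sim\Gamma$, so that $\tilde X\sim\mu$ as well. The goal is then to show
\[
 \mathrm{E}[c(X_0,Y_0)] \le \mathrm{E}[c(\tilde X_0,Y_0)],
\]
since the right-hand side equals $\mathrm{E}_\Gamma[c(Y_0,Z_0)]$ and $\Gamma$ is arbitrary; this yields the claimed optimality of $(X,S(X))$, and the integrability hypotheses guarantee all expectations are finite.

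Next I would extract the pointwise inequality from the two supergradient conditions. Write $Y_0=S_0(X)$. By the outer $c$-supergradient in \eqref{eq:4.7}, with the points $y_k=F_k(X_{-k},\dots,X_{-k+n-1})$, the definition \eqref{eq:4.1} applied to $h_c$ gives, for every alternative point $z_0\in E_1$,
\[
 c(X_0,Y_0)-h_c(X_0) \le c(z_0,Y_0)-h_c(z_0),
\]
and taking $z_0=\tilde X_0$ rearranges to $c(X_0,Y_0)-c(\tilde X_0,Y_0)\le h_c(X_0)-h_c(\tilde X_0)$. By the definition of $h_c$ this difference is $\frac1n\sum_{k=0}^{n-1}\bigl(c(X_0,F_k(X^{(k)}))-c(\tilde X_0,F_k(X^{(k)}))\bigr)$, where $X^{(k)}=(X_{-k},\dots,X_{-k+n-1})$. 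Taking expectations and invoking stationarity to re-index each shifted block as in Theorem~\ref{theo:2.3} (so that the $k$-th term becomes a statement about the $k$-th coordinate of the block $X^n$), the bound reduces to
\[
 \mathrm{E}[c(X_0,Y_0)-c(\tilde X_0,Y_0)]
 \le \mathrm{E}\Bigl[\tfrac1n\textstyle\sum_{k=0}^{n-1}\bigl(c(X_k,F_k(X^n))-c(\tilde X_k,F_k(X^n))\bigr)\Bigr].
\]

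The final step uses the inner $c_n$-supergradient condition \eqref{eq:4.6}, namely $F^n(X^n)\in\partial^{c_n}f(X^n)$. Writing out \eqref{eq:4.1} for the cost $c_n$ and the function $f$, evaluated at the competing point $\tilde X^n$, gives exactly
\[
 c_n(X^n,F^n(X^n))-f(X^n) \le c_n(\tilde X^n,F^n(X^n))-f(\tilde X^n),
\]
which, since $c_n=\frac1n\sum_k c(\cdot_k,\cdot_k)$, says the right-hand side of the previous display is dominated by $\mathrm{E}[f(X^n)-f(\tilde X^n)]=0$, the equality holding because $X^n$ and $\tilde X^n$ have the same law. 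Chaining the inequalities gives $\mathrm{E}[c(X_0,Y_0)]\le\mathrm{E}[c(\tilde X_0,Y_0)]$, proving \eqref{eq:4.8}. I expect the main obstacle to be bookkeeping around the two nested supergradient conditions: one must be careful that $h_c$ is built from precisely the points $F_k$ produced by the inner condition, and that the stationarity re-indexing correctly turns the expectation of a sum of shifted scalar costs into the expectation of $c_n$ applied to a single aligned block $X^n$. A secondary technical point is justifying integrability of $h_c(X_0)$, $h_c(\tilde X_0)$ and $f(\tilde X^n)$ from the stated hypotheses so that every expectation and every rearrangement is legitimate.
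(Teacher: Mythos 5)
Your proposal is correct and follows essentially the same route as the paper's own proof: gluing-lemma construction of a jointly stationary triple $(X,Y,\tilde X)$, the outer $c$-supergradient inequality for $h_c$ evaluated at $z_0=\tilde X_0$, stationarity re-indexing of the shifted blocks to assemble $c_n(X^n,F^n(X^n))-c_n(\tilde X^n,F^n(X^n))$, and then the inner $c_n$-supergradient condition \eqref{eq:4.6} together with $X^n\stackrel{d}{=}\tilde X^n$ to conclude. The bookkeeping and integrability points you flag are handled exactly as you anticipate (integrability of $h_c(X_0)$ follows from $\{c(X_k,F_k(X^n))\}_k\in L^1$ via stationarity, and $f(\tilde X^n)\stackrel{d}{=}f(X^n)$), so there is no gap.
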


\begin{proof}
The construction of the equivariant function in \eqref{eq:4.7} allows us to extend the basic idea of the proof of Theorem \ref{theo:2.2} to the case of general cost function.   Fix any $\Gamma\in M_{\rm s}(\mu,\mu^S)$.
 By the gluing lemma,
 we can consider a jointly stationary process $(X,Y,\tilde{X})$ on a common probability space
 with properties $X\sim \mu$, $Y=S(X)$ and $(\tilde{X},Y)\sim \Gamma$.
 Then we have by construction in \eqref{eq:4.7} and using stationarity of $X$
 \begin{eqnarray*}
  \lefteqn{\mathrm{E}[c(X_0,S_0(X))-c(\tilde{X}_0,S_0(X))]} \quad
  \\
  &\le& \mathrm{E}\left[
  n^{-1}\sum_{k=0}^{n-1}\left.\{c(X_0,y_k)-c(\tilde{X}_0,y_k)\}\right|_{y_k=F_k(X_{-k},\ldots,X_{-k+n-1})}
  \right]
  \\
  &=& \mathrm{E}\left[
  n^{-1}\sum_{k=0}^{n-1}\left.\{c(X_k,y_k)-c(\tilde{X}_k,y_k)\}\right|_{y_k=F_k(X_0,\ldots,X_{n-1})}
  \right]
  \\
  &=& \mathrm{E}\left[
  c_n(X^n,F^n(X^n)) - c_n(\tilde{X}^n,F^n(X^n))
  \right]
  \\
  &\le& \mathrm{E}[f(X^n)-f(\tilde{X}^n)]
  \\
  &=& 0.
 \end{eqnarray*}
The last inequality follows from $c_n$-concavity of $f$ while the last equality is a consequence of the assumption that $X\stackrel{d}{=}\tilde X$. As consequence we obtain that $(X,S(X))$ is an optimal stationary $c$-coupling.
\end{proof}


The conditions in the construction \refeq{eq:4.7} of optimal stationary couplings in Theorem \ref{theo:4.1} (conditions \refeq{eq:4.6}, \refeq{eq:4.7}) simplify essentially in the case $n=1$. In this case we get as corollary of Theorem \ref{theo:4.1}
\begin{cor}\label{cor:4.2}
 Let $X=(X_t)_{t\in U}$ be a stationary process with values in $E_1$ and distribution $\mu$ and let $c:E_1\times E_2\to\real$ be a cost function as in Theorem \ref{theo:4.1}. Let $f:E_1\to\real$ be measurable $c$-concave and define
\begin{equation}\label{eq:4.8-add1}
 S_0(x)\in\partial^c f(x_0),  \quad S_t(x)=S_0(L^{-t}x)\in \partial^c f(x_t), \quad  S(x)=(S_t(x))_{t\in \integer}.
\end{equation}
Then $(X,S(X))$ is an optimal stationary $c$-coupling of the stationary measures $\mu$, $\mu^S$.
\end{cor}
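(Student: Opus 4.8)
The plan is to read off Corollary~\ref{cor:4.2} as the case $n=1$ of Theorem~\ref{theo:4.1}, the only real task being to check that the nested construction \eqref{eq:4.7} degenerates into the one-line prescription \eqref{eq:4.8-add1}. First I would note that for $n=1$ the averaged cost \eqref{eq:4.5} is just $c_1=c$, so a $c_1$-concave $f$ is simply $c$-concave and the map $F^1=F_0$ of \eqref{eq:4.6} satisfies $F_0(x)\in\partial^c f(x_0)$. The point to verify is that the outer $c$-supergradient in \eqref{eq:4.7} is vacuous: with $n=1$ the auxiliary function is $h_c(x_0)=c(x_0,y_0)$, and writing out \eqref{eq:4.1} for the function $x_0\mapsto c(x_0,y_0)$ and testing the element $y_0$ shows $y_0\in\partial^c h_c(x_0)$, since both sides of the defining identity reduce to $\inf_{w}\{c(w,y_0)-c(w,y_0)\}=0$. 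Hence evaluating at $y_0=F_0(x)$ returns $S_0(x)=F_0(x)\in\partial^c f(x_0)$, exactly \eqref{eq:4.8-add1}.

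With this reduction the integrability hypotheses of Theorem~\ref{theo:4.1} collapse to integrability of $c(X_0,S_0(X))$ and of $f(X_0)$ -- the middle family $\{c(X_k,F_k(X^n))\}_{k=0}^{n-1}$ being now the single term $c(X_0,S_0(X))$ -- so no additional assumptions are required and the conclusion \eqref{eq:4.8} specializes verbatim to the asserted optimality. The main (and essentially only) obstacle is precisely this degeneration of the two-step supergradient; everything else is inherited unchanged from Theorem~\ref{theo:4.1}.

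Alternatively, and more transparently, I would reprove the statement directly. Fixing $\Gamma\in M_{\rm s}(\mu,\mu^S)$ and invoking the gluing lemma, realize a jointly stationary triple $(X,Y,\tilde X)$ with $X\sim\mu$, $Y=S(X)$ and $(\tilde X,Y)\sim\Gamma$. From $S_0(X)\in\partial^c f(X_0)$ and definition \eqref{eq:4.1}, testing the defining infimum at $\tilde X_0$ gives pointwise
\[
 c(X_0,S_0(X))-c(\tilde X_0,S_0(X))\le f(X_0)-f(\tilde X_0).
\]
Taking expectations and using $X_0\stackrel{d}{=}\tilde X_0$ (so $Ef(X_0)=Ef(\tilde X_0)$) yields $Ec(X_0,S_0(X))\le Ec(\tilde X_0,Y_0)$, the latter being the $\Gamma$-cost. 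Since $(X,S(X))\in M_{\rm s}(\mu,\mu^S)$ by equivariance of $S$ and $\Gamma$ was arbitrary, $(X,S(X))$ is optimal, i.e. \eqref{eq:4.8-add1} realizes $\bar c_{\rm s}(\mu,\mu^S)$. This route bypasses $h_c$ entirely and makes plain that in the single-coordinate case the $c$-supergradient inequality alone drives the argument.
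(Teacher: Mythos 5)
Your proposal is correct and matches the paper's route: the paper obtains Corollary~\ref{cor:4.2} precisely as the $n=1$ specialization of Theorem~\ref{theo:4.1}, and your direct argument (gluing lemma, supergradient inequality tested at $\tilde X_0$, identical marginals) is exactly that theorem's proof unwound for $n=1$. Your explicit check that the outer supergradient step degenerates -- i.e.\ that $y_0\in\partial^c h_c(x_0)$ when $h_c(\cdot)=c(\cdot,y_0)$, so $S_0(x)=F_0(x)\in\partial^c f(x_0)$ -- is a detail the paper leaves implicit, but it does not change the approach.
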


Thus the equivariant componentwise transformation of a stationary process by supergradients of a $c$-concave function is an optimal stationary coupling. In particular in the case that $E_1=\real^k$ several examples of $c$-optimal transformations are given in \cite{Rueschendorf-1995} resp. \cite{Rachev-Rueschendorf-1998} which can be used to apply Corollary \ref{cor:4.2}.

In case $n\ge 1$ conditions \refeq{eq:4.6}, \refeq{eq:4.7} are in general not obvious. In some cases $c_n$-convexity of a function $f:E_1^n\to\real$ is however easy to see.

\begin{lemma}\label{lem:4.2}
 Let $f(x)=\sum_{k=0}^{n-1} f_k(x_k)$, $f_k:E_1\to\real$, $0\le k\le n-1$. If $f_k$ are $c$-concave, $0\le k\le n-1$, then $f$ is $c_n$-concave and
\begin{equation}\label{eq:4.8-add2}
 \partial^{c_n} f(x) = \sum_{k=0}^{n-1} \partial^c f(x_k).
\end{equation}
\end{lemma}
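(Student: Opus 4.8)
We have a cost $c: E_1 \times E_2 \to \mathbb{R}$, the averaged cost $c_n(x,y) = \frac{1}{n}\sum_{k=0}^{n-1} c(x_k, y_k)$ on $E_1^n \times E_2^n$, and a separable function $f(x) = \sum_{k=0}^{n-1} f_k(x_k)$ where each $f_k: E_1 \to \mathbb{R}$ is $c$-concave. We want to show $f$ is $c_n$-concave and that its $c_n$-supergradient decomposes coordinatewise.

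**Recalling the definitions.** A function $\varphi: E_1 \to \mathbb{R} \cup \{-\infty\}$ is $c$-concave if $\varphi(x) = \inf_{y \in E_2}(c(x,y) - \Psi(y))$ for some $\Psi: E_2 \to \mathbb{R} \cup \{-\infty\}$. So $f_k$ being $c$-concave means $f_k(x_k) = \inf_{y_k}(c(x_k, y_k) - \Psi_k(y_k))$ for functions $\Psi_k$. The $c$-supergradient (eq. 4.1) is $\partial^c f_k(x_k) = \{y_k : c(x_k, y_k) - f_k(x_k) = \inf_{z}(c(z, y_k) - f_k(z))\}$.

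**How I'd prove $c_n$-concavity.** The key observation is that $c_n$-concavity over $E_1^n$ should follow from taking $\Psi^n(y) = \frac{1}{n}\sum_k \Psi_k(y_k)$ as the dual function. Let me check: I want $f(x) = \inf_{y \in E_2^n}(c_n(x,y) - \Psi^n(y))$. Since $c_n(x,y) - \Psi^n(y) = \frac{1}{n}\sum_k [c(x_k, y_k) - \Psi_k(y_k)]$, and the infimum over $y = (y_0, \dots, y_{n-1})$ separates into independent infima over each $y_k$ (this is the crucial point — the objective is additively separable and the constraint set is a product), we get $\inf_y = \frac{1}{n}\sum_k \inf_{y_k}(c(x_k,y_k) - \Psi_k(y_k)) = \frac{1}{n}\sum_k f_k(x_k)$.

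**The scaling subtlety — likely the main obstacle.** The above gives $\inf_y(c_n(x,y) - \Psi^n(y)) = \frac{1}{n}\sum_k f_k(x_k) = \frac{1}{n} f(x)$, which is $f/n$, not $f$. So the naive factor is off by $\frac{1}{n}$. To repair this I'd absorb the $\frac1n$ into the dual function: set $\Psi^n(y) = \frac1n \sum_k n\,\Psi_k(y_k) = \sum_k \Psi_k(y_k)$. Wait, that doesn't separate cleanly either because the $c_n$ already carries a $\frac1n$. The clean fix is to note $f(x) = \sum_k f_k(x_k) = \sum_k \inf_{y_k}(c(x_k,y_k) - \Psi_k(y_k)) = \inf_y \sum_k(c(x_k,y_k) - \Psi_k(y_k)) = \inf_y(n\,c_n(x,y) - \sum_k \Psi_k(y_k))$; then I define $\tilde\Psi(y) := \sum_k \Psi_k(y_k)$ and observe this realizes $f$ as a $(n\,c_n)$-concave function. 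Since $\partial^{c_n}$ is defined via the infimum in (4.1), whether one normalizes $c_n$ by $\frac1n$ or not only rescales $f$ and $\Psi$ by the same constant and does not change which $y$ attains the supergradient. I would state this scaling invariance explicitly as a preliminary remark, then carry the argument with the unnormalized sum to keep it transparent.

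**The supergradient decomposition.** For equation (4.8-add2), I would argue both inclusions are in fact equalities through the separability. Fix $x \in E_1^n$. By definition $y = (y_0, \dots, y_{n-1}) \in \partial^{c_n} f(x)$ iff $c_n(x,y) - f(x) = \inf_z(c_n(z,y) - f(z))$, and since both $c_n(\cdot, y)$ and $f$ are additively separable in the coordinates, the infimum over $z \in E_1^n$ separates: $\inf_z(c_n(z,y) - f(z)) = \frac1n\sum_k \inf_{z_k}(c(z_k, y_k) - f_k(z_k))$. The attainment condition for the sum holds iff it holds in each coordinate separately (since each summand is bounded below by its own infimum and the total equals the sum of infima iff each term equals its infimum). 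This is exactly the statement $y_k \in \partial^c f_k(x_k)$ for every $k$, which gives the product/sum decomposition $\partial^{c_n} f(x) = \prod_{k=0}^{n-1} \partial^c f_k(x_k)$ (written in the paper's additive notation as a sum). The only care needed is ruling out the degenerate case where some coordinate infimum is $-\infty$; the integrability and $c$-concavity hypotheses guarantee the $f_k$ are proper, so each coordinate problem is well-posed and the coordinatewise equivalence is clean.
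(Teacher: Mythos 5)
Your proof is correct and its core is the same as the paper's: additive separability of $c_n(\cdot,y)-f(\cdot)$ lets the infimum over $z\in E_1^n$ split coordinatewise, so attainment at $x$ holds iff it holds in every coordinate, which gives both inclusions of \eqref{eq:4.8-add2} at once. You do go beyond the paper's proof in two worthwhile ways. First, the paper only verifies the inclusion of the coordinatewise supergradients into $\partial^{c_n}f(x)$, declares the converse obvious, and never argues $c_n$-concavity at all; your explicit dual function $\Psi$ supplies that missing step, and it cannot simply be skipped, since by the paper's own Appendix~\ref{sec:C} nonemptiness of supergradients implies $c$-concavity but not conversely, so concavity needs its own (one-line) argument. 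Second, your scaling remark is a genuine catch rather than pedantry: with $f=\sum_k f_k(x_k)$ and $c_n=\frac1n\sum_k c(x_k,y_k)$, the paper's displayed identity $c_n(x,y)-f(x)=\frac1n\sum_k\bigl(c(x_k,y_k)-f_k(x_k)\bigr)$ is false as written; the lemma is correct for $f=\frac1n\sum_k f_k(x_k)$, equivalently for the unnormalized cost $\sum_k c(x_k,y_k)$, and as you note the discrepancy is harmless because both the supergradient condition and $c$-concavity are invariant under multiplying cost and function by the same positive constant. One caution: your final displayed identity for $\inf_z\bigl(c_n(z,y)-f(z)\bigr)$ repeats the paper's slip if read literally with $f=\sum_k f_k$, so state the repaired normalization once and use it throughout; also, no properness argument is really needed for the converse inclusion, because if some coordinate infimum equals $-\infty$ then both sides of the coordinatewise equivalence fail together (the $f_k$ and $c$ are real-valued, so the attainment value is finite).
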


\begin{proof}
 Let $y_k\in\partial^c f_k(x_k)$, $0\le k\le n-1$, then with $y=(y_k)_{0\le k\le n-1}$ by definition of $c$-supergradients
\[
 c_n(x,y)-f(x)=\frac{1}{n}\sum_k(c(x_k,y_k)-f_k(x_k)) = \inf\{c_n(z,y)-f(z);z\in E_1^n\}
\]
and thus $y\in\partial^{c_n}f(x)$. The converse inclusion is obvious.
\end{proof}

Lemma \ref{lem:4.2} allows to construct some examples of functions $F^n$ satisfying condition \refeq{eq:4.5}. For $n>1$ non-emptiness of the $c$-supergradient of $h_c(x_0)=\frac1n \sum_{k=0}^{n-1} c(x_0,y_k)$ has to be established. The condition $u_0\in\partial^c h_c (x_0)$ is equivalent to
\begin{equation}\label{eq:4.8-2add1}
 c(x_0,u_0)-h_c(x_0) = \inf_z(c(z,u_0)-h_c(z)).
\end{equation}
In the differentiable case \refeq{eq:4.8-2add1} implies the necessary condition
\begin{equation}\label{eq:4.8-2add2}
 \nabla_x c(x_0,u_0) = \nabla_x h_c(x_0) = \frac1n \sum_{k=0}^{n-1} \nabla_x c(x_0,y_k).
\end{equation}
If the map $u\to\nabla_x c(x_o,u)$ is invertible then equation \refeq{eq:4.8-2add2} implies
\begin{equation}\label{eq:4.8-2add3}
 u_0 = (\nabla_x c)^{-1} (x_0,\cdot) \left(\frac1n \sum_{k=0}^{n-1} \nabla_x c(x_0,y_k)\right)
\end{equation}
(see \refeq{eq:4.4}). Thus in case that \refeq{eq:4.8-2add1} has a solution, it is given by \refeq{eq:4.8-2add3}.

\begin{lemma}\label{lem:4.3}
 If \refeq{eq:4.8-2add1} has a solution and $u\to \nabla_x c(x_0,u)$ is invertible, then for $x_0\in E_1$ $u_0=(\nabla_x c)^{-1} (x_0,\cdot) \left(\frac1n \sum_{k=0}^{n-1} \nabla_x c(x_0,y_k)\right)$ is a supergradient of $h_c$ in $x_0$,
\begin{equation}\label{eq:4.8-2add4}
 u_0\in\partial^c h_c(x_0).
\end{equation}
\end{lemma}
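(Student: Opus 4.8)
The plan is to combine the standing existence hypothesis with the uniqueness afforded by invertibility, working in the differentiable case as in the discussion preceding the lemma. By assumption equation \refeq{eq:4.8-2add1} has a solution, which by the definition of the $c$-supergradient means there is some $\tilde u_0\in\partial^c h_c(x_0)$; equivalently, $z=x_0$ is a global minimizer of $z\mapsto c(z,\tilde u_0)-h_c(z)$. My goal is to show that this a priori unknown solution $\tilde u_0$ must in fact coincide with the explicit $u_0$ appearing in the statement, whence $u_0\in\partial^c h_c(x_0)$.

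First I would extract the first-order optimality condition. Since $x_0$ minimizes $z\mapsto c(z,\tilde u_0)-h_c(z)$, the gradient in $z$ vanishes at $x_0$, giving $\nabla_x c(x_0,\tilde u_0)=\nabla_x h_c(x_0)$. Using the defining formula $h_c(x_0)=\frac1n\sum_{k=0}^{n-1}c(x_0,y_k)$ and differentiating termwise yields $\nabla_x h_c(x_0)=\frac1n\sum_{k=0}^{n-1}\nabla_x c(x_0,y_k)$, so that $\tilde u_0$ satisfies exactly the necessary condition \refeq{eq:4.8-2add2}.

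Next I would invert. By the hypothesis that $u\mapsto\nabla_x c(x_0,u)$ is invertible, the equation \refeq{eq:4.8-2add2} determines its solution uniquely, and solving gives $\tilde u_0=(\nabla_x c)^{-1}(x_0,\cdot)\big(\frac1n\sum_{k=0}^{n-1}\nabla_x c(x_0,y_k)\big)$, which is precisely the element $u_0$ in \refeq{eq:4.8-2add3}. Hence $u_0=\tilde u_0\in\partial^c h_c(x_0)$, as claimed.

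I do not expect a serious obstacle here: the argument is simply that a solution exists by hypothesis, every solution satisfies the invertible first-order equation, and that equation has a unique root. The one point deserving care is that \refeq{eq:4.8-2add2} is only a \emph{necessary} condition, so one cannot conclude $u_0\in\partial^c h_c(x_0)$ directly from the first-order equation alone; sufficiency is supplied precisely by the standing existence hypothesis, which guarantees that the unique first-order candidate $u_0$ is a genuine supergradient rather than a spurious critical point.
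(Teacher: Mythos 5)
Your proof is correct and follows essentially the same route as the paper: the paper's own justification is exactly the discussion preceding the lemma, namely that \refeq{eq:4.8-2add1} forces the first-order condition \refeq{eq:4.8-2add2}, which under invertibility of $u\mapsto\nabla_x c(x_0,u)$ has the unique solution \refeq{eq:4.8-2add3}, so any existing supergradient must coincide with the explicit $u_0$. Your closing remark correctly identifies the one logical subtlety (necessity versus sufficiency of the first-order equation) that makes the existence hypothesis indispensable, which is precisely the role it plays in the paper's argument as well.
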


\begin{Example}\label{ex:1-add}
If $c(x,y)=H(x-y)$ for a strict convex function $H$, then $\nabla_x(c(x,\cdot)$ is invertible and we can construct the necessary $c$-supergradients of $h_c$. If for example $c(x,y)=\|x-y\|^2$, then we get for any $x_0\in\real^k$,
\begin{equation}\label{eq:4.8-2add5}
 u_0=u_0(x_0)=\frac1n \sum_{k=0}^{n-1} y_k = \overline y
\end{equation}
is independent of $x_0$ and
\begin{equation}\label{eq:4.8-2add6}
 \overline y\in\partial^c h_c(x_0), \quad \forall x_0\in\real ^k.
\end{equation}

If $c(x,y)=\|x-y\|^p$, $p>1$, then we get for $x_0\in\real^k$
\begin{equation}\label{eq:4.8-2add7}
 u_0=u_0(x_0)=x_0+|h(x_0)|^{^{\frac{1}{p-1}}} \frac{h(x_0)}{|h(x_0)|},
\end{equation}
where $h(x_0)=\frac1n \sum_{k=0}^{n-1} \|x_0-y_k\|^{p-1} \frac{x_0-y_k}{\|x_0-y_k\|}$. For this and related further examples see \cite{Rueschendorf-1995}.
\end{Example}

The $c$-concavity of $h_c$ has a geometrical interpretation. $u_0\in\partial^c h_c(x_0)$ if the difference of the distance of $z_0$ in $E_1$ to $u_0$ in $E_2$ and the average distance of $z_0$ to the given points $y_0,\dots,y_{n-1}$ in $E_2$ is minimized in $x_0$. The $c$-concavity of $h_c$ can be interpreted as a positive curvature condition for the distance $c$. To handle this condition we introduce the notion of convex stability.

\begin{definition}\label{def:4.2}
The cost function $c$ is called \emph{convex stable} of index $n\ge 1$ if for any $y\in E_2^n$
\begin{equation}\label{eq:4.9}
 h_c(x_0)=\frac1n\sum_{k=0}^{n-1} c(x_0,y_k), \quad x_0\in E_1, \quad\text{is $c$-concave}.
\end{equation}
$c$ is called convex stable if it is convex stable of index $n$ for all $n\ge 1$.
\end{definition}

\begin{Example}\label{ex:4.1}
 Let $E_1=E_2=H$ be a Hilbert space, as for example $H=\real^m$, let $c(x,y)=\|x-y\|^2/2$ and fix $y\in H^n$, then
\begin{eqnarray}\label{eq:4.10}
 h_c(x_0) &=& \frac1n \sum_{k=0}^{n-1} c(x_0,y_k) \nonumber\\  
&=&c(x_0,\bar y)+\frac1n \sum_{k=0}^{n-1} c(\bar y, y_k),
\end{eqnarray}
where $\bar y=\frac1n\sum_{k=0}^{n-1} y_k$ Thus by definition \eqref{eq:4.2} $h_c$ is $c$-concave and a $c$-supergradient of $h_c$ is given by $\bar y$ independent of $x_0$, i.e. 
\begin{equation}\label{eq:4.11}
 \bar y\in\partial^c h_c (x_0), \quad \forall x_0\in H.
\end{equation}
Thus the squared distance $c$ is convex stable.
\end{Example}

The property of a cost function to be convex stable is closely connected with the geometric property of non-negative cross curvature. 
Let $E_1$ and $E_2$ be open connected subsets in $\real^m$ ($m\geq 1$) with coordinates $x=(x^i)_{i=1}^m$ and $y=(y^j)_{j=1}^m$.
Let $c:E_1\times E_2\to\real$ be $C^{2,2}$, i.e. $c$ is two times differentiable in each variable.
Denote the cross derivatives by $c_{ij,k}=\partial^3c/\partial x^i\partial x^j\partial y^k$ and so on.
Define $c_x(x,y)=(\partial c/\partial x^i)_{i=1}^m$, $c_y(x,y)=(\partial c/\partial y^j)_{j=1}^m$, 
$U=\{c_x(x,y)\mid y\in E_2\}\subset\real^m$, $V=\{c_y(x,y)\mid x\in E_1\}\subset\real^m$.
Assume the following two conditions.
\begin{itemize}
\item[{[B1]}] The map $c_x(x,\cdot):E_2\to U$ and $c_y(\cdot,y):E_1\to V$ are diffeomorphic, i.e.,
they are injective and the matrix $(c_{i,j}(x,y))$ is positive definite everywhere.
\item[{[B2]}] The sets $U$ and $V$ are convex.
\end{itemize}
The conditions [B1] and [B2] are called bi-twist and bi-convex conditions, respectively.
Now we define the \emph{cross curvature} $\sigma(x,y;u,v)$ in $x\in E_1$, $y\in E_2$, $u\in\real^m$ and $v\in\real^m$ by
\begin{equation}\label{eq:4.12}
\sigma(x,y;u,v):= \sum_{i,j,k,l}\left(-c_{ij,kl} + \sum_{p,q}c_{ij,q}c^{p,q}c_{p,kl}\right)u^iu^jv^kv^l
\end{equation}
where $(c^{i,j})$ denotes the inverse matrix of $(c_{i,j})$.

The following result is given by \cite{Kim-McCann-2008}. Note that these authours use the terminology
{\em time-convex sliding-mountain} instead of the notion convex-stability as used in this paper.

\begin{propos}\label{prop:4.3}
  Assume the conditions [B1] and [B2].
  Then $c$ is convex stable if and only if the cross curvature is nonnegative, i.e.,
%
\begin{equation}\label{eq:4.13}
 \sigma(x,y;u,v)\ge 0,\quad \forall x,y,u,v.
\end{equation}
\end{propos}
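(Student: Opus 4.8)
The plan is to connect convex stability of the cost $c$ directly to the geometry of $c$-concave functions and then invoke the differential characterization of non-negative cross curvature. The key observation is that $c$-concavity of $h_c(x_0)=\frac1n\sum_{k=0}^{n-1}c(x_0,y_k)$ is, under conditions [B1] and [B2], a local second-order condition that can be expressed through the cross curvature tensor $\sigma$. First I would reformulate $c$-concavity infinitesimally: a function $\varphi$ on $E_1$ is $c$-concave precisely when, after the change of variables induced by the bi-twist condition [B1], it becomes ordinary concave in the appropriate coordinates. Concretely, the bi-twist condition lets us define for each fixed $y$ the $c$-exponential-type coordinates via inverting $c_x(x,\cdot)$, and $c$-concavity of $\varphi$ is equivalent to the Hessian of a suitably transformed function being non-positive.

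Next I would carry out the second-order computation. The function $h_c$ is an average of the elementary functions $x_0\mapsto c(x_0,y_k)$. Each such elementary function is trivially $c$-concave (it equals $c(x_0,y_k)-\Psi(y)$ with the defining infimum in \eqref{eq:4.2} attained at $y=y_k$). The question is whether the \emph{average} remains $c$-concave, and this is exactly where curvature enters: averaging commutes with concavity in linear coordinates, but $c$-concavity lives in the nonlinear $c$-geometry, so the obstruction to preserving $c$-concavity under averaging is measured by the failure of the coordinate change to be affine, i.e.\ by the cross derivatives $c_{ij,kl}$ and the correction term $\sum_{p,q}c_{ij,q}c^{p,q}c_{p,kl}$. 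Writing out the Hessian of $h_c$ in the transformed coordinates and contracting against test vectors $u$ (in $x$) and using the spread of the points $y_k$ to generate an arbitrary direction $v$ (in the dual $V$), the leading second-order term is precisely $\sigma(x,y;u,v)$ as defined in \eqref{eq:4.12}.

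For the forward direction (non-negative cross curvature implies convex stability) I would show that $\sigma\ge 0$ guarantees the transformed Hessian of $h_c$ is non-positive for every choice of $y\in E_2^n$ and every $n$, using the bi-convexity condition [B2] to ensure the relevant $c$-supergradient lies in the admissible set $U$ so that the supremum/infimum defining $c$-concavity is genuinely attained. For the converse (convex stability implies $\sigma\ge0$), I would argue by contraposition: if $\sigma(x_0,y_0;u,v)<0$ at some point and in some directions, then by choosing $n=2$ and placing two points $y_1,y_2$ near $y_0$ along the direction $v$ (infinitesimally spread so that $\bar y\approx y_0$), the averaged function $h_c$ fails to be $c$-concave in a neighborhood of $x_0$ in the direction $u$, contradicting convex stability of index $2$. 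The bi-twist condition is used throughout to pass freely between $y$-coordinates and $c_y$-coordinates so that perturbations of $y_k$ translate into arbitrary perturbations $v\in V$.

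The main obstacle will be the converse direction, specifically making the local-to-global and discrete-to-infinitesimal passages rigorous: convex stability is a statement about genuine finite averages over arbitrary configurations of points, whereas $\sigma\ge0$ is a pointwise infinitesimal condition, so the delicate step is to localize a hypothetical curvature violation and realize it through an admissible finite point configuration $y\in E_2^n$ while keeping the $c$-supergradient construction valid under [B1], [B2]. Since this equivalence is precisely the content of the cited result of \cite{Kim-McCann-2008} (their time-convex sliding-mountain characterization), I would ultimately reduce the proof to invoking that theorem after verifying that convex stability as defined in Definition~\ref{def:4.2} coincides with their sliding-mountain property, so that the substance of the argument is the dictionary between the two formulations rather than a re-derivation of the curvature computation.
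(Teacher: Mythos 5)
Your proposal ends exactly where the paper does: the paper gives no proof of Proposition~\ref{prop:4.3} at all, but simply attributes the result to \cite{Kim-McCann-2008}, remarking only that their ``time-convex sliding-mountain'' property is the same notion as convex stability in Definition~\ref{def:4.2}. Since your final step is precisely this reduction --- check the dictionary between the two formulations and invoke Kim--McCann --- your approach coincides with the paper's (the heuristic curvature computation you sketch is likewise not carried out in the paper, which gives a self-contained argument only for the one-dimensional case, Proposition~\ref{prop:4.4}, in Appendix~\ref{sec:D}).
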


The cross-curvature is related to the Ma-Trudinger-Wang tensor (\cite{Ma-Trudinger-Wang-2005}), which is the restriction of $\sigma(x,y;u,v)$ to $u^iv^jc_{i,j}=0$.
Known examples that have non-negative cross-curvature
are the $n$-sphere
(\cite{Kim-McCann-2008}, \cite{Figalli-Rifford-2009}), its perturbation
(\cite{Delanoe-Ge-2010}, \cite{Figalli-Rifford-Villani-2010}), their tensorial product and their Riemannian submersion.

If $E_1, E_2\subset\real$,
then the conditions [B1] and [B2] are implied from a single condition in case $c_{x,y}=\partial^2c(x,y)/\partial x\partial y\neq 0$.
Hence we have the following result as a corollary. A selfcontained simplified proof of this result is given in Appendix \ref{sec:D}.

\begin{propos}\label{prop:4.4}
  Let $E_1, E_2$ be open intervals in $\real$ and let $c\in C^{2,2}$, $c:E_1\times E_2\to\real$. Assume that $c_{x,y}\not= 0$ for all $x,y$. Then $c$ is convex stable if and only if $\sigma(x,y):=-c_{xx,yy}+c_{xx,y}c_{x,yy}/c_{x,y}\geq 0$.
\end{propos}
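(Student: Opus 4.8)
The plan is to apply the general Proposition~\ref{prop:4.3} in the one-dimensional case $m=1$ and to show that the two hypotheses [B1] and [B2] reduce to the single nonvanishing condition $c_{x,y}\neq 0$, so that only the curvature inequality \eqref{eq:4.13} remains to be translated into the stated scalar form. First I would observe that in dimension $m=1$ all the tensorial indices collapse: the matrix $(c_{i,j})$ is the single scalar $c_{x,y}$, its inverse $(c^{i,j})$ is $1/c_{x,y}$, and the cross-curvature expression \eqref{eq:4.12} becomes
\[
 \sigma(x,y;u,v) = \Big(-c_{xx,yy} + c_{xx,y}\,\frac{1}{c_{x,y}}\,c_{x,yy}\Big)u^2v^2
 = \sigma(x,y)\,u^2v^2,
\]
where $\sigma(x,y)=-c_{xx,yy}+c_{xx,y}c_{x,yy}/c_{x,y}$. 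Since $u^2v^2\ge 0$ with the factor being strictly positive for $u,v\neq 0$, the nonnegativity of $\sigma(x,y;u,v)$ for all $u,v$ is equivalent to $\sigma(x,y)\ge 0$ for all $x,y$. This identifies the scalar curvature condition in the statement with condition \eqref{eq:4.13} of Proposition~\ref{prop:4.3}.

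\textbf{Reducing the hypotheses.} The next step is to check that, on open intervals of $\real$, the assumption $c_{x,y}\neq 0$ everywhere already forces [B1] and [B2]. For [B1], note that in one dimension positive definiteness of $(c_{i,j}(x,y))$ is just $c_{x,y}(x,y)>0$; since $c_{x,y}$ is continuous and never zero on the connected domain $E_1\times E_2$, it has constant sign. After possibly replacing $c$ by the cost $\tilde c(x,y)=c(x,-y)$ on the reflected interval (which changes neither convex stability nor the sign pattern of $\sigma$, as $\sigma$ is invariant under this reflection), we may assume $c_{x,y}>0$. Then for fixed $x$ the map $y\mapsto c_x(x,y)$ has strictly positive derivative $c_{x,y}$, hence is a strictly increasing, thus injective, $C^1$ diffeomorphism onto its image $U$; symmetrically $x\mapsto c_y(x,y)$ is a diffeomorphism onto $V$. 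This gives the bi-twist condition [B1]. For [B2], the image of an interval under a continuous strictly monotone map is again an interval, hence convex; so $U$ and $V$ are convex, giving the bi-convex condition [B2].

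\textbf{Conclusion and main obstacle.} With [B1] and [B2] established from $c_{x,y}\neq 0$, Proposition~\ref{prop:4.3} applies verbatim and yields that $c$ is convex stable if and only if $\sigma(x,y;u,v)\ge 0$ for all $x,y,u,v$, which by the first step is equivalent to $\sigma(x,y)\ge 0$. This proves the corollary. I expect the main obstacle to lie not in the index collapse, which is purely computational, but in the reduction of hypotheses: one must argue carefully that the sets $U$ and $V$ are genuinely convex and that the sign reduction $c_{x,y}>0$ can be carried out without affecting convex stability. Since the appendix promises a self-contained simplified proof, I would in fact sidestep the sign normalization by treating $U$ and $V$ as the images of intervals under continuous strictly monotone maps directly, and verify convex stability from Definition~\ref{def:4.2} by the monotone-rearrangement characterization of the one-dimensional $c$-supergradient, thereby avoiding the full tensorial machinery of Proposition~\ref{prop:4.3} while still recovering the stated scalar curvature criterion.
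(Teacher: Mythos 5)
Your argument is correct, but it takes a genuinely different route from the paper's. You obtain Proposition~\ref{prop:4.4} by verifying the hypotheses of the quoted Kim--McCann result: the tensor collapse $\sigma(x,y;u,v)=\sigma(x,y)\,u^2v^2$ in dimension one is right; the reflection $\tilde c(x,\tilde y)=c(x,-\tilde y)$ legitimately normalizes to $c_{x,y}>0$ (it preserves both convex stability and the value of $\sigma$, as a short computation confirms); and strict monotonicity gives injectivity of $y\mapsto c_x(x,y)$ and $x\mapsto c_y(x,y)$, while continuous strictly monotone images of intervals are intervals, so [B1] and [B2] hold. This makes rigorous the remark in the main text that the proposition follows ``as a corollary'' of Proposition~\ref{prop:4.3}. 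The paper's own proof, in Appendix~\ref{sec:D}, is deliberately independent of that proposition (which is cited from \cite{Kim-McCann-2008} without proof): it argues directly from Definition~\ref{def:4.2}, introducing the inverse maps $\eta_x=(c_x(x,\cdot))^{-1}$ and $\xi_y=(c_y(\cdot,y))^{-1}$, identifying $\partial^cf(x)=\{\eta_x(f'(x))\}$ for differentiable $f$, and proving the chain: convex stability $\Leftrightarrow$ convexity of $u\mapsto c(x,\eta_x(u))-c(z,\eta_x(u))$ for all $x,z$ $\Leftrightarrow$ $\sigma\geq 0$, the last equivalence coming from twice differentiating the identities $c_x(x,\eta_x(u))=u$ and $c_y(\xi_{y_0}(v),y_0)=v$. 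Your route buys brevity and a clean link to the general tensorial theory; the paper's buys self-containedness, which is the stated purpose of the appendix. So if Proposition~\ref{prop:4.3} may be taken as given, your proof is complete; if not, your reduction does not by itself discharge the claim, and your closing sketch of a direct one-dimensional argument (via monotone rearrangement of the supergradient) is only gestured at, not carried out.
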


\begin{Example}\label{ex:4.2}
 Let $E_1,E_2\subset \real$ be open intervals and let $E_1\cap E_2=\emptyset$. Consider $c(x,y)=\frac1p|x-y|^p$ with $p\ge 2$ or $p<1$. Then $c$ is convex stable. In fact $c_{x,y}=-(p-1)|x-y|^{p-2}\not= 0$ for all $x,y$ and $\sigma(x,y)=(p-1)(p-2)|x-y|^{p-4}\ge 0$ for all $x,y$. As $p\to 0$, we also have a convex stable cost $c(x,y)=\log |x-y|$.

If the cost function $c$ is a metric then the optimal coupling in the case $E_1=E_2=\real$ can be reduced to the case of $E_1\cap E_2=\emptyset$ as in the classical Kantorovich--Rubinstein theorem. This is done by subtracting (and renormalizing) from the marginals $\mu_0$, $\nu_0$ the lattice infimum, i.e. defining 
\begin{equation}
\mu'_0:=\frac1a (\mu_0-\mu_0\wedge \nu_0), \quad 
\nu'_0:=\frac1a (\nu_0-\mu_0\wedge \nu_0).
\end{equation}
The new probability measures live on disjoint subsets to which the previous proposition can be applied.
\end{Example}

Some classes of optimal $c$-couplings for various distance functions $c$ have been discussed in \cite{Rueschendorf-1995}, see also \cite{Rachev-Rueschendorf-1998}. The examples discussed in these papers can be used to establish $c_n$-concavity of $f$ in some cases. This is an assumption used in Theorem \ref{theo:4.1} for the construction of the optimal stationary couplings.
Note that $c_n$ is convex-stable if $c$ is convex-stable. Therefore the following proposition due to \cite{Figalli-Kim-McCann-2010} (partially \cite{sei-2010c}) is also useful to construct a $c_n$-concave function $f$.

\begin{propos}\label{propos:4.11}
 Assume [B1] and [B2]. Then $c$ satisfies the non-negative cross curvature condition
 if and only if the space of $c$-concave functions is convex, that is, $(1-\lambda)f+\lambda g$ is $c$-concave as long as $f$ and $g$ are $c$-concave and $\lambda\in[0,1]$.
\end{propos}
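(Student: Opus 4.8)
The plan is to deduce the equivalence from the identification of convex stability with non-negative cross curvature already recorded in Proposition~\ref{prop:4.3}, together with three elementary facts about the class of $c$-concave functions. A function is $c$-concave precisely when it is an infimum of functions of the form $c(\cdot,y)-\Psi(y)$ (a restatement of \eqref{eq:4.2}); each single map $x\mapsto c(x,y)$ is itself $c$-concave (take $\Psi$ equal to $0$ at $y$ and $-\infty$ elsewhere); and an infimum of $c$-concave functions is again $c$-concave, as one sees by taking the pointwise supremum of the associated potentials $\Psi$. For the direction ``convexity of the $c$-concave class $\Rightarrow$ non-negative cross curvature,'' since every $x\mapsto c(x,y_k)$ is $c$-concave, convexity forces every equal-weight average $\frac1n\sum_{k=0}^{n-1}c(\cdot,y_k)=h_c$ to be $c$-concave. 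By Definition~\ref{def:4.2} this says exactly that $c$ is convex stable, and Proposition~\ref{prop:4.3} then gives $\sigma\ge0$.

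For the converse, fix $c$-concave $f,g$ and $\lambda\in[0,1]$ and set $h=(1-\lambda)f+\lambda g$. I would argue pointwise: at each $x_0$ choose $c$-supergradients $y_f\in\partial^c f(x_0)$ and $y_g\in\partial^c g(x_0)$ (in the sense of \eqref{eq:4.1}), so that $f(\cdot)\le f(x_0)+c(\cdot,y_f)-c(x_0,y_f)$ with equality at $x_0$, and likewise for $g$. Using the bi-twist and bi-convex conditions [B1], [B2], let $t\mapsto y_t$ be the $c$-segment from $y_f$ to $y_g$ based at $x_0$, i.e.\ $c_x(x_0,y_t)=(1-t)c_x(x_0,y_f)+t\,c_x(x_0,y_g)$, and put $\bar y:=y_\lambda$. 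Summing the two support inequalities with weights $1-\lambda$ and $\lambda$, the claim $\bar y\in\partial^c h(x_0)$ reduces to
\[
(1-\lambda)\,G(0)+\lambda\,G(1)\le G(\lambda),\qquad G(t):=c(x,y_t)-c(x_0,y_t),
\]
for every $x$, that is, to concavity of $G$ along the $c$-segment. Once this holds at every $x_0$, the function $h$ is touched from above at each point by a $c$-affine function lying above it, hence equals the infimum of these supports and is $c$-concave.

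The main obstacle is precisely the concavity of $t\mapsto c(x,y_t)-c(x_0,y_t)$ along $c$-segments. This is the geometric reformulation of the non-negative cross-curvature condition \eqref{eq:4.13} due to \cite{Kim-McCann-2008}: differentiating $G$ twice and using that $c_x(x_0,y_t)$ is affine in $t$ reproduces, up to sign, the tensor $\sigma$ evaluated on the velocity of $y_t$ and on the increment of $c_y$ between $x_0$ and $x$, so that $\sigma\ge0$ yields $G''\le0$. I would either invoke this characterization directly or, to stay within the paper's framework, replace the $c$-segment step by a reduction to convex stability: writing $f=\inf_\alpha(c(\cdot,y_\alpha)-a_\alpha)$ and $g=\inf_\beta(c(\cdot,y_\beta)-b_\beta)$ and using that convex combination commutes with these independent infima,
\[
h=\inf_{\alpha,\beta}\big[(1-\lambda)c(\cdot,y_\alpha)+\lambda\,c(\cdot,y_\beta)-(1-\lambda)a_\alpha-\lambda b_\beta\big],
\]
so each bracket is, up to an additive constant, a $\lambda$-weighted average of $c(\cdot,y_\alpha)$ and $c(\cdot,y_\beta)$ and hence $c$-concave by convex stability (for rational $\lambda$ directly, by repeating points; for general $\lambda$ by approximation), whence $h$ is $c$-concave as an infimum of $c$-concave functions. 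The one delicate point along this second route is the passage from rational to arbitrary $\lambda$, which I would handle through the closedness under limits of the pointwise second-order characterization of $c$-concavity available in the smooth bi-twist setting of Propositions~\ref{prop:4.3} and \ref{prop:4.4}.
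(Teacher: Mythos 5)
You should first note that the paper does not actually prove Proposition \ref{propos:4.11}: it is quoted from \cite{Figalli-Kim-McCann-2010} (partially \cite{sei-2010c}), so your attempt can only be measured against the cited literature and the paper's neighbouring tools (Proposition \ref{prop:4.3}, Appendix \ref{sec:C}, Appendix \ref{sec:D}). Your forward direction is correct and complete: each $x\mapsto c(x,y)$ is $c$-concave, convexity of the class gives by induction every equal-weight average $h_c$, that is exactly Definition \ref{def:4.2}, and Proposition \ref{prop:4.3} converts convex stability into $\sigma\ge 0$. This is the elementary half of the equivalence, and your argument for it is clean.

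The converse, however, contains a genuine gap. Route A fails at its first step: you choose $y_f\in\partial^c f(x_0)$ and $y_g\in\partial^c g(x_0)$, but $c$-concavity does not guarantee nonempty supergradients --- the paper's own Appendix \ref{sec:C} counterexample ($f(x)=\sqrt{x}$, $c(x,y)=-xy$) makes precisely this point --- and the concavity of $G$ along $c$-segments that you then need is the Kim--McCann theorem itself, so this route is either incomplete or circular. Route B is the right reduction and is complete for \emph{rational} $\lambda$: the double-infimum identity, convex stability with repeated points, and stability of $c$-concavity under infima (via $\sup_\alpha\Psi_\alpha$) are all sound. But the passage to irrational $\lambda$ is the actual crux of the proposition, and your proposed patch --- ``closedness under limits of the pointwise second-order characterization of $c$-concavity'' --- is not available: $c$-concavity is a global notion ($f=f^{cc}$, Lemma \ref{lem:c2}), Propositions \ref{prop:4.3} and \ref{prop:4.4} are second-order conditions on the \emph{cost} $c$ rather than on candidate functions $f$, and pointwise limits of $c$-concave functions need not be $c$-concave without uniform control or compactness (the minimizing $y$'s can escape). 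The repair consistent with the paper's toolkit is the continuity-in-$\lambda$ device of Appendix \ref{sec:D}: under [B1] and [B2] the candidate supergradient $\eta_x\bigl((1-\lambda)c_x(x,y_\alpha)+\lambda c_x(x,y_\beta)\bigr)$ is well defined for \emph{every} real $\lambda$ because $U$ is convex; one shows the supergradient inequality for rational $\lambda$, observes that both sides are continuous in $\lambda$, extends it to all $\lambda\in[0,1]$, and concludes $c$-concavity of each bracket from Lemma \ref{lem:c3}. Even this requires an argument (an $\varepsilon$-supergradient limit, or properness) that the smooth $c$-concave function $\phi_\lambda$ at rational $\lambda$ really admits that supergradient --- a point Appendix \ref{sec:D} itself glosses over. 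As written, your proof of the hard direction does not close.
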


\begin{Example}
 Consider Example~\ref{ex:4.2} again.
 Let $E_1=(0,1)$, $E_2=(-\infty,0)$, $c(x_1,y_1)=p^{-1}(x_1-y_1)^p$ ($p\geq 2$) and $c_n(x,y)=(np)^{-1}\sum_{k=0}^{n-1}(x_k-y_k)^p$.
 An example of $c_n$-concave functions of the form $f(x)=\sum_{k=0}^{n-1}f_k(x_k)$ with suitable real functions $f_k$ is given in \cite{Rueschendorf-1995} Example 1 (b).
 We add a further example here.
 Put $\bar{x}=n^{-1}\sum_{k=0}^{n-1}x_k$ and let $f(x)=A(\bar{x})$ with a real function $A$.
 We prove $f(x)$ is $c_n$-concave if $A'\geq 1$ and $A''\leq 0$.
 For example, $A(\xi)=\xi+\sqrt{\xi}$ satisfies this condition.
 Equation (\ref{eq:4.3}) becomes
 \begin{equation}\label{eq:4.24}
  n^{-1}(x_i-y_i)^{p-1} = n^{-1}A'(\bar{x})
 \end{equation}
 which uniquely determines $y_i\in E_2$ since $A'\geq 1$ and $x_i\in E_1$.
 To prove $c_n$-concavity of $f$, it is sufficient to show convexity of $x\mapsto c_n(x,y)-f(x)$ for each $y$. Indeed, the Hessian is
 \[
 \delta_{ij}n^{-1}(p-1)(x_i-y_i)^{p-2} - n^{-2}A''(\bar{x})
 \succeq -n^{-2}A''(\bar{x})
 \succeq 0
 \]
 in matrix sense.
 Note that the set of functions $A$ satisfying $A'\geq 1$ and $A''\leq 0$
 is convex, which is consistent with Proposition~\ref{propos:4.11}.
 Therefore, any convex combination of $A(\bar{x})$ and the $c_n$-concave function $\sum_kf_k(x_k)$
 discussed above is also $c_n$-concave by Proposition~\ref{propos:4.11}.
\end{Example}


\appendix 
\section*{Appendix}
\section{Gluing lemma for stationary measures} \label{sec:A}

The gluing lemma is a well known construction of joint distributions. We repeat this construction in order to derive an extension to the gluing of jointly stationary processes. For given probability measures $P$ and $Q$ on some measurable spaces $ E_1$ and $E_2$, we denote the set of joint probability measures on $ E_1\times E_2$
with marginals $P$ and $Q$ by $M(P,Q)$.

\begin{lemma}[Gluing lemma]\label{lem:a1}
 Let $P_1$, $P_2$, $P_3$ be Borel probability measures on Polish spaces $ E_1, E_2, E_3$, respectively. 
 Let $P_{12}\in M(P_1,P_2)$ and $P_{23}\in M(P_2,P_3)$.
 Then there exists a probability measure $P_{123}$ on $E_1\times E_2\times E_3$ with marginals $P_{12}$ on $E_1\times E_2$ and $P_{23}$ on $E_2\times E_3$.
\end{lemma}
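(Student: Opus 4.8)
The plan is to prove the gluing lemma by disintegrating the two given couplings $P_{12}$ and $P_{23}$ over their common marginal $P_2$ on $E_2$, and then reassembling a measure on the triple product by multiplying the conditional kernels. Concretely, I would first invoke the existence of regular conditional probability distributions, which is guaranteed because $E_1$, $E_2$, $E_3$ are Polish (hence the relevant $\sigma$-algebras are countably generated and the spaces are standard Borel). From $P_{12}\in M(P_1,P_2)$ I obtain a Markov kernel $K_1(x_2,dx_1)$ from $E_2$ to $E_1$ such that $P_{12}(dx_1,dx_2)=K_1(x_2,dx_1)\,P_2(dx_2)$. Symmetrically, from $P_{23}\in M(P_2,P_3)$ I obtain a kernel $K_3(x_2,dx_3)$ from $E_2$ to $E_3$ with $P_{23}(dx_2,dx_3)=P_2(dx_2)\,K_3(x_2,dx_3)$.

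The key step is then to define the candidate measure $P_{123}$ on $E_1\times E_2\times E_3$ by the formula
\[
P_{123}(dx_1,dx_2,dx_3)=K_1(x_2,dx_1)\,P_2(dx_2)\,K_3(x_2,dx_3),
\]
which intuitively makes $x_1$ and $x_3$ conditionally independent given $x_2$. Making this rigorous amounts to checking that for a bounded measurable test function $\phi$ on the triple product the iterated integral
\[
\int_{E_2}\!\Big(\int_{E_1}\!\int_{E_3}\phi(x_1,x_2,x_3)\,K_1(x_2,dx_1)\,K_3(x_2,dx_3)\Big)P_2(dx_2)
\]
is well defined; the measurability of the inner integral in $x_2$ follows from standard kernel-composition arguments, and a monotone-class or Carathéodory extension argument promotes this set function to a genuine probability measure on the product $\sigma$-algebra.

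Finally I would verify the two marginal conditions. For the $E_1\times E_2$ marginal I integrate out $x_3$: since $K_3(x_2,\cdot)$ is a probability kernel, the $x_3$-integral of $1$ is identically $1$, leaving $K_1(x_2,dx_1)\,P_2(dx_2)=P_{12}$, as desired. The $E_2\times E_3$ marginal is checked symmetrically by integrating out $x_1$ and using that $K_1(x_2,\cdot)$ is a probability kernel, recovering $P_2(dx_2)\,K_3(x_2,dx_3)=P_{23}$. This completes the construction.

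I expect the main obstacle to be purely technical rather than conceptual: namely, establishing the existence and joint measurability of the regular conditional kernels $K_1$ and $K_3$, which is exactly where the Polish (standard Borel) hypothesis is essential, and confirming that the conditional-independence product is measurable as a function of the conditioning variable $x_2$. Once the disintegration machinery is in place, the marginal verifications are routine applications of the fact that probability kernels integrate to one. A brief remark is worth adding that this bare statement ignores the stationarity refinement promised in the section title; in the paper's applications one glues jointly stationary processes, so one would additionally want the kernels chosen shift-equivariantly, but the lemma as stated here only asserts existence of the glued measure with the prescribed pairwise marginals.
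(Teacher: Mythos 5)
Your proposal is correct and follows essentially the same route as the paper: disintegrate $P_{12}$ and $P_{23}$ into regular conditional probabilities over the common marginal $P_2$ (using that the spaces are Polish), and glue by the conditionally independent product $\int_{A_2} P_{1|2}(A_1|x)\,P_{3|2}(A_3|x)\,P_2(\mathrm{d}x)$, checking the two marginals by integrating out the opposite coordinate. The paper's proof is exactly this construction, stated more tersely; your extra attention to measurability of the kernel product and the extension argument only fills in details the paper leaves implicit.
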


\begin{proof}
 Let $P_{1|2}(\cdot|\cdot)$ be the regular conditional probability measure such that
 \[
  P_{12}(A_1\times A_2) = \int_{A_2} P_{1|2}(A_1|x)P_2(\mathrm{d}x)
 \]
 and $P_{3|2}(\cdot|\cdot)$ be the regular conditional probability measure such that
\[
  P_{32}(A_3\times A_2) = \int_{A_2} P_{3|2}(A_3|x)P_2(\mathrm{d}x).
 \]
Then a measure $P_{123}$ uniquely defined by
 \[
  P_{123}(A_1\times A_2\times A_3) := \int_{A_2} P_{12}(A_1|x)P_{32}(A_3|x)P_2(\mathrm{d}x)
 \]
 satisfies the required condition.
\end{proof}

Next we consider an extension of the gluing lemma to stationary processes. We note that even if a measure $P_{123}$ on $E_1^\integer \times E_2^\integer \times E_3^\integer$ has stationary marginals $P_{12}$ on $E_1^\integer \times E_2^\integer$ and $P_{23}$ on $E_2^\integer \times E_3^\integer $, it is not necessarily true that $P$ is stationary. 
For example, consider the $\{-1,1\}$-valued fair coin processes $X=(X_t)_{t\in\integer}$ and $Y=(Y_t)_{t\in\integer}$ independently, and let $Z_t=(-1)^tX_tY_t$.
Then $(X,Y)$ and $(Y,Z)$ have stationary marginal distributions respectively, but $(X,Y,Z)$ is not jointly stationary because $X_tY_tZ_t=(-1)^t$.

For given stationary measures $P$ and $Q$ on some product spaces, let $M_{\rm s}(P,Q)$ be the jointly stationary measures with marginal distributions $P$ and $Q$ on the corresponding product spaces.

\begin{lemma}\label{lem:a2}
 Let $ E_1, E_2, E_3$ be Polish spaces.
 Let $P_1,P_2,P_3$ be stationary measures on $ E_1^\integer , E_2^\integer , E_3^\integer $, respectively.
 Let $P_{12}\in M_{\rm s}(P_1,P_2)$ and $P_{23}\in M_{\rm s}(P_2,P_3)$.
 Then there exists a jointly stationary measure $P_{123}$ on $ E_1^\integer \times E_2^\integer \times E_3^\integer$ with marginals $P_{12}$ and $P_{23}$.
\end{lemma}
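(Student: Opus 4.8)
The plan is to adapt the ordinary gluing construction of Lemma~\ref{lem:a1} so that it respects the shift. The naive approach---applying Lemma~\ref{lem:a1} directly to the product spaces $E_1^\integer$, $E_2^\integer$, $E_3^\integer$ treated as single Polish spaces---produces a measure $P_{123}$ with the correct pairwise marginals, but the counterexample immediately preceding the statement shows that stationarity need \emph{not} be inherited. The fix is to build the joint law not by conditioning once on the entire middle trajectory, but by a construction that is manifestly shift-commuting. Concretely, I would condition the first and third coordinates on the \emph{whole} middle process $x_2 \in E_2^\integer$ and glue them to be conditionally independent given $x_2$, just as in Lemma~\ref{lem:a1}, but then verify that this particular choice of conditional independence is compatible with the shift.

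First I would let $P_{1|2}(\cdot \mid x_2)$ and $P_{3|2}(\cdot \mid x_2)$ denote regular conditional distributions of the first (resp.\ third) $E^\integer$-coordinate given the middle trajectory $x_2 \in E_2^\integer$, obtained from $P_{12}$ and $P_{23}$ respectively (these exist since the spaces are Polish). Then I define
\[
 P_{123}(A_1 \times A_2 \times A_3) := \int_{A_2} P_{1|2}(A_1 \mid x_2)\, P_{3|2}(A_3 \mid x_2)\, P_2(\mathrm{d}x_2),
\]
exactly as in Lemma~\ref{lem:a1}. By construction its $(E_1^\integer, E_2^\integer)$-marginal is $P_{12}$ and its $(E_2^\integer, E_3^\integer)$-marginal is $P_{23}$, so the marginal requirement is automatic and needs no further work.

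The heart of the proof, and the main obstacle, is showing $P_{123}$ is jointly stationary, i.e.\ invariant under the simultaneous shift $L\times L \times L$. The key observation is that the conditional distributions can be chosen equivariantly: because $P_{12}$ is jointly stationary, the family $\{P_{1|2}(\cdot \mid x_2)\}$ satisfies the shift-compatibility $P_{1|2}(A_1 \mid x_2) = P_{1|2}(L^{-1}A_1 \mid L x_2)$ for $P_2$-a.e.\ $x_2$, and likewise for $P_{3|2}$. I would establish this by the essential uniqueness of regular conditional probabilities: the pushforward of $P_{1|2}(\cdot \mid x_2)$ under $L$, reparametrized through the shift on the conditioning variable, is again a version of the conditional distribution of $P_{12}$, hence agrees a.e.\ with $P_{1|2}$. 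Granting this equivariance, a direct change of variables $x_2 \mapsto L^{-1}x_2$ in the integral defining $P_{123}$, together with the stationarity $P_2 = P_2^{L}$, shows that $P_{123}$ evaluated on $L^{-1}(A_1\times A_2\times A_3)$ equals $P_{123}(A_1\times A_2\times A_3)$, which gives invariance on the generating product rectangles and hence on all Borel sets.

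I expect the delicate point to be the measure-theoretic bookkeeping around the equivariance of the conditional kernels---one must handle the $P_2$-null sets uniformly in the shift (there are countably many shifts $L^t$, $t\in\integer$, so a single null set can be discarded) and confirm that the conditional independence of coordinates $1$ and $3$ given $2$ is genuinely preserved rather than merely the one-dimensional marginals. Once the equivariance of the kernels is in hand, the invariance computation itself is a routine substitution, so the whole argument reduces to justifying that the regular conditional probabilities may be taken shift-compatible, which is the one step I would treat with care.
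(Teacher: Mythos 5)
Your proposal is correct and takes essentially the same route as the paper: the paper's proof of this lemma is the single remark that one applies the same construction as in Lemma~\ref{lem:a1}, i.e.\ the conditional-independence gluing given the whole middle trajectory, exactly as you do. Your equivariance argument for the kernels $P_{1|2}$, $P_{3|2}$ (with the null sets controlled via a countably generated $\sigma$-algebra and the change of variables $x_2\mapsto Lx_2$ under the shift-invariant $P_2$) is precisely the verification that the paper leaves implicit, and it is sound.
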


\begin{proof}
 One can apply the same construction as in the preceding lemma.
\end{proof}

\section{\boldmath $c$-concave function} \label{sec:C}

We review some basic results on $c$-concavity. See \cite{Rueschendorf-1991, Rueschendorf-1995, Rachev-Rueschendorf-1998, Villani-2003, Villani-2009} for details.

Let $E_1$ and $E_2$ be two Polish spaces and $c: E_1\times E_2\to\real $ be a measurable function.

\begin{definition}\label{def:d1}
 We define the  $c$-transforms of functions $f$ on $E_1$ and $g$ on $E_2$ by 
\[
f^c(y):= \inf_{x\in E}\{c(x,y)-f(x)\} %
\quad\text{and}\quad %
g^c(x):=\inf_{y\in E_2 }\{c(x,y)-g(y)\}.
\]
 A function $f$ on $ E_2$ is called $c$-concave if there exists some function $g$ on $E_2$ such that $f(x)=g^c(x)$.
\end{definition}

In general, $f^{cc}\geq f$ holds. Indeed, for any $x$ and $y$, we have $c(x,y)-f^c(y)\geq f(x)$. Then $f^{cc}(x)=\inf_{y}\{c(x,y)-f^c(y)\}\geq f(x)$.

\begin{lemma}\label{lem:c2}
 Let $f$ be a function of $ E_1$. Then $f$ is $c$-concave if and only if $f^{cc}=f$.
\end{lemma}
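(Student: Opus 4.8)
The statement to prove is Lemma~\ref{lem:c2}: $f$ is $c$-concave if and only if $f^{cc} = f$.

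The plan is to prove the two implications separately, with the $(\Leftarrow)$ direction being essentially immediate and the $(\Rightarrow)$ direction requiring the substantive work. For $(\Leftarrow)$, suppose $f^{cc} = f$. By Definition~\ref{def:d1}, a function $h$ on $E_1$ is $c$-concave precisely when $h = g^c$ for some $g$ on $E_2$. Since $f^{cc} = (f^c)^c$ and $f^c$ is a function on $E_2$, we may take $g := f^c$ and conclude immediately that $f = f^{cc} = (f^c)^c$ is $c$-concave. No inequality is even needed here.

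For the forward direction $(\Rightarrow)$, suppose $f$ is $c$-concave, so $f = g^c$ for some $g : E_2 \to \real$. The goal is to show $f^{cc} = f$. I would first recall the general inequality $f^{cc} \ge f$, which is already established in the paragraph preceding the lemma (for any $x, y$ one has $c(x,y) - f^c(y) \ge f(x)$, and taking the infimum over $y$ gives $f^{cc}(x) \ge f(x)$). So it remains to prove the reverse inequality $f^{cc} \le f$. The key step is to show that applying the $c$-transform three times returns the single transform, i.e.\ $g^{ccc} = g^c$ for any $g$. Granting this, since $f = g^c$ we get $f^{cc} = (g^c)^{cc} = g^{ccc} = g^c = f$, which closes the argument.

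To establish $g^{ccc} = g^c$, I would apply the general inequality from the preamble in two ways. Writing $h := g^c$ (a function on $E_2$, so its transforms are taken in the appropriate direction), the inequality $h^{cc} \ge h$ applied to $h = g^c$ gives $g^{ccc} \ge g^c$. For the opposite inequality, I would use that the $c$-transform is order-reversing: if $\phi \le \psi$ then $\phi^c \ge \psi^c$, directly from the definition $\phi^c(y) = \inf_x \{c(x,y) - \phi(x)\}$. Applying the base inequality $g^{cc} \ge g$ and then transforming once reverses it to $g^{ccc} \le g^c$. Combining the two gives $g^{ccc} = g^c$, as required.

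The main obstacle is mainly bookkeeping: one must be careful that the $c$-transform alternates between functions on $E_1$ and functions on $E_2$, so that the "same" inequality $\phi^{cc} \ge \phi$ and the order-reversing property are being applied on the correct space each time. There is no convexity, continuity, or measurability subtlety to resolve, since $c$-concavity is defined purely through the infimum operation; the entire proof rests on the elementary inequality $f^{cc} \ge f$ and the order-reversing monotonicity of $(\cdot)^c$, both of which are formal consequences of the definition.
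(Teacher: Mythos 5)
Your proof is correct and takes essentially the same approach as the paper: its ``only if'' argument is precisely your order-reversal step, deducing from $f^c = g^{cc} \ge g$ that $f^{cc}(x) = \inf_y\{c(x,y)-f^c(y)\} \le \inf_y\{c(x,y)-g(y)\} = g^c(x) = f(x)$ (i.e.\ $g^{ccc} \le g^c$), and then combining with $f^{cc} \ge f$. Your extra verification that $g^{ccc} \ge g^c$ is redundant, since the lower bound $f^{cc} \ge f$ already serves that purpose, but it is harmless.
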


\begin{proof}
 The ``if'' part is obvious.
 We prove the ``only if'' part.
 Assume $f=g^c$.
 Then $f^c=g^{cc}\geq g$, and therefore
 \[
  f^{cc}(x)=\inf_y\{c(x,y)-f^c(y)\}\leq \inf_y\{c(x,y)-g(y)\} = g^c(x) = f(x).
 \]
 Since $f^{cc}\geq f$ always holds,
 we have $f^{cc}=f$.
\end{proof}

Define the $c$-supergradient of any function $f: E_1\to\real $ by
\[
 \partial^cf(x) = \left\{y\in E_2\mid c(x,y)-f(x)=f^c(y)\right\}.
\]

\begin{lemma} \label{lem:c3}
Assume that $\partial^cf(x)\neq \emptyset$ for any $x\in E_1$. Then $f$ is $c$-concave.
\end{lemma}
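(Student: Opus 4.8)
The plan is to show that the assumption $\partial^c f(x) \neq \emptyset$ for every $x \in E_1$ forces the inequality $f^{cc} \leq f$, which together with the always-valid inequality $f^{cc} \geq f$ yields $f^{cc} = f$; by Lemma~\ref{lem:c2} this is precisely $c$-concavity of $f$. So the whole argument reduces to extracting a pointwise bound from the existence of a $c$-supergradient.

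First I would fix an arbitrary $x \in E_1$ and pick, using the hypothesis, some $y_x \in \partial^c f(x)$. By the definition of the $c$-supergradient this means
\[
 c(x,y_x) - f(x) = f^c(y_x).
\]
Now I would bound $f^{cc}(x)$ from above by the single term corresponding to $y = y_x$ in the infimum defining $f^{cc}$:
\[
 f^{cc}(x) = \inf_{y \in E_2}\{c(x,y) - f^c(y)\} \leq c(x,y_x) - f^c(y_x).
\]
Substituting the supergradient identity $f^c(y_x) = c(x,y_x) - f(x)$ into the right-hand side makes the $c(x,y_x)$ terms cancel, leaving $f^{cc}(x) \leq f(x)$. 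Since $x$ was arbitrary, this gives $f^{cc} \leq f$ everywhere.

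Combining with the general fact $f^{cc} \geq f$ (noted in the excerpt just before Lemma~\ref{lem:c2}) gives $f^{cc} = f$, and then Lemma~\ref{lem:c2} immediately yields that $f$ is $c$-concave. There is essentially no obstacle here: the result is a direct unwinding of the definitions, and the only thing to be careful about is that the supergradient identity lets exactly the right term cancel. I would remark that this is the natural converse to the trivial observation that a $c$-concave function automatically has nonempty $c$-supergradient sets (since $f = g^c$ attains its defining infimum where relevant), so Lemma~\ref{lem:c3} shows the two conditions are in fact equivalent.
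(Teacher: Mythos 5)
Your proof of the lemma itself is correct and is essentially the paper's own argument: pick $y_x\in\partial^c f(x)$, use it as a test point in the infimum defining $f^{cc}(x)$ to get $f^{cc}(x)\le c(x,y_x)-f^c(y_x)=f(x)$, combine with the universal inequality $f^{cc}\ge f$, and conclude via the (trivial) direction of Lemma~\ref{lem:c2} that $f=f^{cc}=(f^c)^c$ is $c$-concave.

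However, your closing remark is wrong: the two conditions are \emph{not} equivalent, and the paper says so explicitly right after the lemma. A $c$-concave function $f=g^c$ is an infimum, and that infimum need not be attained at any $y$, so $\partial^c f(x)$ can be empty at some points. The paper's counterexample: take $E_1=[0,\infty)$, $E_2=\real$, $c(x,y)=-xy$, so that $c$-concavity coincides with ordinary concavity; then $f(x)=\sqrt{x}$ is concave (hence $c$-concave) but $\partial^c f(0)=\emptyset$, since no finite slope works at $x=0$. Your parenthetical justification ``$f=g^c$ attains its defining infimum where relevant'' is precisely the step that fails. Strike that remark and the proof stands as is.
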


\begin{proof}
 Fix $x\in E_1$ and let $y\in\partial^cf(x)$.
 Then we have
 \[
  f(x)=c(x,y)-f^c(y)\geq f^{cc}(x)\geq f(x).
 \]
 Hence $f^{cc}=f$ and thus $f$ is $c$-concave.
\end{proof}

The converse of Lemma~\ref{lem:c3} does not hold in general. For example, consider $ E_1=[0,\infty)$, $E_2=\real$ and $c(x,y)=-xy$. Then $c$-concavity is equivalent to usual concavity. The function $f(x)=\sqrt{x}$ is concave but the supergradient at $x=0$ is empty.

\section{Proof of Proposition \ref{prop:4.4}} \label{sec:D}

Consider the cost function $c(x,y)$ on $E_1\times E_2$ with the assumptions in Proposition \ref{prop:4.4}.
Since $c_{x,y}\neq 0$, the map $y\mapsto c_x(x,y)$ is injective.
Denote its image and inverse function by $U=\{c_x(x,y)\mid y\in E_2 \}$ and $\eta_x=(c_x(x,\cdot))^{-1}:U\mapsto E_2 $, respectively.
Hence $c_x(x,\eta_x(u))=u$ for all $u\in U$ and $\eta_x(c_x(x,y))=y$ for all $y\in E_2 $.
Note that $U$ is an interval and therefore convex.
Also note that the subscript $x$ of $\eta_x$ does not mean the derivative.
By symmetry, we can define $V=\{c_y(x,y)\mid x\in E_1\}$ and $\xi_y=(c_y(\cdot,y))^{-1}:V\mapsto E_1$.

We first characterize the $c$-gradient of a differentiable $c$-concave function $f$.
Let $x\in E_1$ and $y\in\partial^cf(x)$.
Then $c(x,y)-f(x)\leq c(z,y)-f(z)$ for any $z\in E_1$.
By the tangent condition at $z=x$, we have $c_x(x,y)-f'(x)=0$, or equivalently, $y=\eta_x(f'(x))$.
Hence we have $\partial^cf(x)=\{\eta_x(f'(x))\}$.
We denote the unique element also by $\partial^cf(x)=\eta_x(f'(x))$.

To prove Proposition \ref{prop:4.4}, it is sufficient to show that the following conditions are equivalent:
\begin{itemize}
 \item[(i)] $c$ is convex stable for any index $n$
 \item[(ii)] The map $u\mapsto c(x,\eta_x(u))-c(z,\eta_x(u))$ is convex for all $x,z\in E_1$.
 \item[(iii)] $-c_{xx,yy}+c_{xx,y}c_{x,yy}/c_{x,y}\geq 0$.
\end{itemize}

We first prove (i) $\Leftrightarrow$ (ii).
Assume (i).
Let $\mathbb{Q}$ be the set of rational numbers.
By the definition of convex stability,
for any $u_0,u_1\in U$ and $\lambda\in[0,1]\cap\mathbb{Q}$,
the function
\[
 \phi(x):=(1-\lambda)c(x,\eta_x(u_0))+\lambda c(x,\eta_x(u_1))
\]
is $c$-concave. The $c$-gradient of $\phi$ is given by
\[
 \partial^c\phi(x)
 =\eta_x((1-\lambda)c_x(x,\eta_x(u_0))+\lambda c_x(x,\eta_x(u_1)))
 =\eta_x((1-\lambda)u_0+\lambda u_1).
\]
Then $c$-concavity, $c(x,\partial^c\phi(x)) - \phi(x) \leq c(z,\partial^c\phi(x)) - \phi(z)$ for any $z$, is equivalent to
\begin{eqnarray*}
\lefteqn{c(x,\eta_x((1-\lambda)u_0+\lambda u_1)) - c(z,\eta_x((1-\lambda)u_0+\lambda u_1))} \quad  \\
 &\leq& (1-\lambda)\{c(x,\eta_x(u_0))-c(z,\eta_x(u_0))\}
 +\lambda \{c(x,\eta_x(u_1))-c(z,\eta_x(u_1))\}.
\end{eqnarray*}
Since both hand side is continuous with respect to $\lambda$, (ii) is obtained.
The converse is similarly.

Next we prove (ii) $\Leftrightarrow$ (iii). Assume (ii).
Fix $x,z\in E_1$ and $u_0\in U $. Let $y_0=\eta_x(u_0)$ and therefore $u_0=c_x(x,y_0)$.
Since $u\mapsto c(x,\eta_x(u))-c(z,\eta_x(u))$ is convex for any $z$, its second derivative at $u=u_0$ is non-negative:
\begin{align}
 \lefteqn{\left.\partial_u^2\{c(x,\eta_x(u))-c(z,\eta_x(u))\}\right|_{u=u_0}} \quad
 \nonumber\\
 &=\{c_{yy}(x,y_0)-c_{yy}(z,y_0)\}(\eta_x^{(1)}(u_0))^2
 +\{c_y(x,y_0)-c_y(z,y_0)\}\eta_x^{(2)}(u_0)
 \nonumber\\
 &\geq 0.
 \label{eq:D.1}
\end{align}
On the other hand, by differentiating the identity $c_x(x,\eta_x(u))=u$ twice
at $u=u_0$, we have
\[
 c_{x,yy}(x,y_0)(\eta_x^{(1)}(u_0))^2
 + c_{x,y}(x,y_0)\eta_x^{(2)}(u_0) = 0.
\]
Combining the two relations, we have
\[
 \left[\{c_{yy}(x,y_0)-c_{yy}(z,y_0)\} - \frac{c_{x,yy}(x,y_0)}{c_{x,y}(x,y_0)}
 \{c_y(x,y_0)-c_y(z,y_0)\}
 \right](\eta_x^{(1)}(u_0))^2 \geq 0.
\]
Since $\eta_x^{(1)}(u_0)=1/c_{x,y}(x,y_0)\neq 0$, we obtain
\[
 \{c_{yy}(x,y_0)-c_{yy}(z,y_0)\} - \frac{c_{x,yy}(x,y_0)}{c_{x,y}(x,y_0)}
 \{c_y(x,y_0)-c_y(z,y_0)\} \geq 0.
\]
Now let $v_0=c_y(x,y_0)$ and $v=c_y(z,y_0)$.
Then $x=\xi_{y_0}(v_0)$ and $z=\xi_{y_0}(v)$ from the definition of $\xi_y$. We have
\begin{align}
 \{c_{yy}(\xi_{y_0}(v_0),y_0)-c_{yy}(\xi_{y_0}(v),y_0)\}
 - \frac{c_{x,yy}(\xi_{y_0}(v_0),y_0)}{c_{x,y}(\xi_{y_0}(v_0),y_0)}
 (v_0-v) \geq 0.
 \label{eq:D.2}
\end{align}
This means convexity of the map $v\mapsto -c_{yy}(\xi_{y_0}(v),y_0)$.
Hence its second derivative is non-negative.
Therefore
\[
 -c_{xx,yy}(z,y_0)(\xi_{y_0}^{(1)}(v))^2 - c_{x,yy}(z,y_0)\xi_{y_0}^{(2)}(v) \geq 0.
\]
On the other hand, by differentiating the identity $c_y(\xi_{y_0}(v),y_0)=v$ twice,
we have
\[
 c_{xx,y}(z,y_0)(\xi_{y_0}^{(1)}(v))^2 + c_{x,y}(z,y_0)\xi_{y_0}^{(2)}(v) = 0.
\]
Combining the two relations, we have
\begin{align}
 \left[-c_{xx,yy}(z,y_0) + \frac{c_{xx,y}(z,y_0)}{c_{x,y}(z,y_0)}c_{x,yy}(z,y_0)
 \right] (\xi_{y_0}^{(1)}(v))^2 \geq 0.
 \label{eq:D.3}
\end{align}
Since $\xi_{y_0}^{(1)}(v)=1/c_{x,y}(z,y_0)\neq 0$, we conclude
\[
 -c_{xx,yy}(z,y_0) + c_{x,yy}(z,y_0)\frac{c_{xx,y}(z,y_0)}{c_{x,y}(z,y_0)} \geq 0.
\]
Since $z$ and $y_0(=\eta_x(u_0))$ are arbitrary,
we obtain (iii).

The proof of (iii) $\Rightarrow$ (ii) is just the converse.
First, \eqref{eq:D.3} follows from (iii).
Since \eqref{eq:D.3} is the second derivative of
the left hand side of \eqref{eq:D.2},
the convexity condition \eqref{eq:D.2} follows.
The condition \eqref{eq:D.2} is equivalent to \eqref{eq:D.1}, and \eqref{eq:D.1} is also equivalent to (ii).
This completes the proof.


\bigskip
\bibliographystyle{plainnat}
{
%

\label{sec:bib}
}

\bigskip\bigskip

\noindent
\parbox[t]{.43\textwidth}{%
Ludger Rüschendorf\\
Mathematische Stochastik\\
University of Freiburg\\
Eckerstr. 1\\
79104 Freiburg\\
Germany\\
ruschen@stochastik.uni-freiburg.de} %
\hfill
\parbox[t]{.53\textwidth}{
Tomonari Sei\\
Department of Mathematics\\
Keio University\\
3-14-1 Hiyoshi, Kohoku-ku\\
Yokohama, 223-8522\\
Japan\\
sei@math.keio.ac.jp} %

\end{document}